\documentclass[11pt,twoside,a4paper,leqno]{article}
\usepackage{a4}
\usepackage[utf8x]{inputenc}
\usepackage[english]{babel}
\usepackage[T1]{fontenc}
\usepackage[usenames,dvipsnames]{xcolor}
\usepackage{amsmath,amsthm}
\usepackage{amsfonts,amssymb}
\usepackage{array}

\usepackage{float}

\usepackage{color} 
      \definecolor{bleu_sombre}{rgb}{0,0,0.6}  
     \definecolor{rouge_sombre}{rgb}{0.8,0,0}
    \definecolor{vert_sombre}{rgb}{0.1,0.4,0.1}
\usepackage{etex}
\usepackage{tikz}
\usepackage{pgfkeys}
\usepackage{pgffor}
\usepackage{pgfcalendar}
\usetikzlibrary{arrows,calc,matrix,topaths,positioning,scopes,shapes,decorations}
 \usepackage{mathpazo}
 \linespread{1.05} 
\usepackage{microtype} 
\usepackage[numbers]{natbib}
\usepackage{fancyhdr}			
\pagestyle{fancy}
\fancyhf{}
\fancyhead[RE]{\textsc{\nouppercase{\leftmark}}}
\fancyhead[LO]{\slshape{\nouppercase{\rightmark}}}
\fancyfoot[C]{\thepage}
\setlength{\headheight}{14.5pt}

\DeclareMathOperator{\Ker}{Ker}

\DeclareMathOperator{\Hom}{Hom}
\DeclareMathOperator{\ds}{s^{-1}}
\DeclareMathOperator{\s}{s}

\DeclareMathOperator{\Ba}{\mathcal{B}}

\newtheorem*{rep@theo}{\rep@title}
\newcommand{\newreptheo}[2]{%
\newenvironment{rep#1}[1]{%
 \def\rep@title{#2 \ref{##1}}%
 \begin{rep@theo}}%
 {\end{rep@theo}}}

\newtheorem*{rep@prop}{\rep@title}
\newcommand{\newrepprop}[2]{%
\newenvironment{rep#1}[1]{%
 \def\rep@title{#2 \ref{##1}}%
 \begin{rep@prop}}%
 {\end{rep@prop}}}

\newtheorem*{rep@cor}{\rep@title}
\newcommand{\newrepcor}[2]{%
\newenvironment{rep#1}[1]{%
 \def\rep@title{#2 \ref{##1}}%
 \begin{rep@cor}}%
 {\end{rep@cor}}}
\makeatother

\title{On the homology of the double cobar construction of a double suspension}
\author{Alexandre Quesney \thanks{
{Laboratoire de Math\'ematiques Jean-Leray, 2 rue de la Houssini\`ere - BP 92208 F-44322 Nantes Cedex 3, France.} 
E-mail address: alexandre.quesney@univ-nantes.fr
}
}

\date{}

\begin{document}

\maketitle

\theoremstyle{definition}
\newtheorem{de}{Definition}[section]
\theoremstyle{remark}
\newtheorem{rmq}[de]{Remark}
\newtheorem{exple}[de]{Example}
\theoremstyle{plain}
\newtheorem{theo}[de]{Theorem}
\newtheorem{prop}[de]{Proposition}
\newtheorem{lem}[de]{Lemma}
\newtheorem{cor}[de]{Corollary}
\newrepcor{cor}{Corollary}
\newrepprop{prop}{Proposition}
\newtheorem{properties}[de]{Properties}
\newreptheo{theo}{Theorem}
\numberwithin{equation}{section}

\newtheorem{proj}{Projet}

\newtheorem*{theo*}{Theorem}
\newtheorem*{prop*}{Proposition}
\newtheorem*{cor*}{Corollary}

\newcommand{\cob}{\Omega C_* (X)}
\newcommand{\ccob}{\Omega^2 C_* (X)}
\newcommand{\Hopf}{\mathcal{H}}
\newcommand{\ot}{\otimes} 
\newcommand{\diskf}{\mathcal{C}^f_2} 
\newcommand{\disk}{\mathcal{C}_2}
\newcommand{\cobH}{\Omega \mathcal{H}}
\newcommand{\exd}{\widetilde{\sigma}} 
\newcommand{\tra}{\mathcal{T}}
\newcommand{\Zdeux}{\mathbb{Z}/2\mathbb{Z}}
\newcommand{\os}{\Omega \Sigma}
\newcommand{\La}{A}
\newcommand{\ov}[1]{\overline{#1}}
\newcommand{\OS}[1]{\Omega^{#1} \Sigma^{#1}}
\newcommand{\uv}[1]{\underline{#1}}
\newcommand{\wid}[1]{\widetilde{#1}}
\newcommand{\att}[1]{\textcolor{red}{#1}}
\newcommand{\note}[1]{\marginpar{\begin{footnotesize}\att{#1}\end{footnotesize}}}
\newcommand{\ca}[1]{\mathcal{#1}}

\setcounter{tocdepth}{1}

\begin{abstract}
The double cobar construction of a double suspension comes with a Connes-Moscovici structure, that is a homotopy G-algebra (or Gerstenhaber-Voronov algebra) structure together with a particular BV-operator up to a homotopy. 
We show that  
the homology of the double cobar construction of a double suspension is a free BV-algebra. In characteristic two, a similar result holds for the underlying $2$-restricted Gerstenhaber algebra. These facts rely on a formality theorem for the double cobar construction of a double suspension.
\end{abstract}

\section*{Introduction}

For $n\geq 1$, the $n$-fold loop spaces (of $SO(n)$-spaces) are characterized by the (framed) little $n$-discs operad. 
In case $n=1$, the cobar construction provides an algebraic model for the chain complex of loop spaces; it can be iterated to form the double cobar construction that provides a model for the chain complex of $2$-fold loop spaces. 

We are interested in further algebraic structures on the double cobar construction encoded by operads that are either equivalent to the chain little $2$-discs operad, or equivalent to the chain \emph{framed} little $2$-discs operad in the case of $2$-fold loop spaces of $S^1$-spaces. 
Recall that the homology operad of the little $2$-discs operad is the Gerstenhaber operad and that the homology operad of the framed little $2$-discs operad is the Batalin-Vilkovisky operad. 

Let $\Omega^2 Y$ be the double  loop space of a pointed topological space $Y$. 
Then its rational homology $H_*(\Omega^2 Y)$ is a Gerstenhaber algebra:
\begin{itemize}
 \item the graded commutative product is the Pontryagin product $-\cdot-$; 
\item the degree $1$ Lie bracket is the Browder bracket $\{-;-\}$; 
\item they are compatible in the following way: 
\begin{align}\label{eq: Poisson}
\{a;b\cdot c\} = \{ a; b \}\cdot c +(-1 )^{(|a|+ 1)|b|} b\cdot\{ a;c\},
\end{align}
for all homogeneous $a,b,c \in H_*(\Omega^2 Y)$.
\end{itemize}
If $Y$ is endowed with an action of the circle $S^1$ (preserving the base point) then the double loop space $\Omega^2 Y= \Hom(S^2,Y)$ is also an $S^1$-space with the following action:
\begin{align*}
\Delta: S^1\times \Hom(S^2,Y)&\to \Hom(S^2,Y)\\
 (g,\phi)&\mapsto (g\cdot \phi)(s):=g\cdot \phi (g^{-1}\cdot s),%
\end{align*}
where the circle $S^1$ acts on the 2-sphere $S^2$ by the base point-preserving rotations.
This action induces (together with the Pontryagin product) a Batalin-Vilkovisky algebra structure $(H_*(\Omega^2 Y),\Delta)$, \cite{Getzler}.

In \cite{Quesney} the author shows  that the double cobar construction of a double suspension is endowed with a Connes-Moscovici structure.
Essentially it is a homotopy G-algebra structure together with the boundary operator defined by A. Connes and H. Moscovici in \cite{ConnesMosc}, that is a BV-operator up to a homotopy. This can be seen as a first step for obtaining a strong homotopy BV-algebra structure on the double cobar construction. 

Our main structural results are the followings.%
\begin{theo*}[\ref{th: BV sur 2Cobar}]
 Let the coefficient field be $\mathbb{Q}$.
 Then $\Omega^2 C_*(\Sigma^2X)$ has a Connes-Moscovici structure. The induced BV-algebra structure $(H_*(\Omega^2 C_*(\Sigma^2 X)),\Delta_{CM})$ is free on the reduced homology $H^+_*(X)$.
\end{theo*}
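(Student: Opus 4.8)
The first assertion is the theorem of \cite{Quesney} recalled above, so all that remains is to identify the BV-algebra that the Connes-Moscovici structure induces on $H_*(\Omega^2 C_*(\Sigma^2 X))$. The plan is to combine the formality of the double cobar construction of a double suspension with the classical description of the homology of the double loop space of a double suspension.

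First I would invoke the formality theorem for $\Omega^2 C_*(\Sigma^2 X)$. Since $\Sigma^2 X$ is $2$-connected, the iterated cobar theorems of Adams and Baues already make $\Omega^2 C_*(\Sigma^2 X)$ a chain model of $C_*(\Omega^2\Sigma^2 X)$, and formality upgrades this to a zig-zag of quasi-isomorphisms compatible with the whole Connes-Moscovici structure, i.e.\ a zig-zag of algebras over a cofibrant model of the operad that governs it. The consequence I need is that the BV-algebra induced on homology is the \emph{strict} one: the product is the Pontryagin product, the bracket is the homotopy $G$-algebra bracket, the operator is $\Delta_{CM}=H_*(B)$, and no higher operation contributes.

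Next I would identify this BV-algebra with a free one. The homotopy $G$-algebra bracket induces the Browder bracket on homology --- this follows from the operadic comparison with the chains of the little $2$-discs operad, and can also be read off the explicit formal model --- so the underlying Gerstenhaber algebra of $H_*(\Omega^2 C_*(\Sigma^2 X))=H_*(\Omega^2\Sigma^2 X;\mathbb{Q})$ is, by the classical computation of F.~Cohen, the free Gerstenhaber algebra on the reduced homology $H^+_*(X;\mathbb{Q})$, with generators the images of the unit map $X\to\Omega^2\Sigma^2 X$; the same is visible directly from the formal model, since $C_*(\Sigma^2 X)$, being the chains on a suspension, has null-homotopic reduced diagonal, so that iterating the cobar construction on the resulting primitively generated tensor coalgebra yields, on homology, exactly that free Gerstenhaber algebra. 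Finally I would pin down $\Delta_{CM}$ on the generators: on the chain level $B$ is built from a cyclic-type permutation and is therefore null-homotopic on the length-one generators coming from the cells of $X$; geometrically, the circle acting trivially on $X$ forces the image of $X\to\Omega^2\Sigma^2 X$ to lie in the $S^1$-fixed points, on which the operator of \cite{Getzler} vanishes. Hence $\Delta_{CM}$ restricts to $0$ on $H^+_*(X)$, and since a BV-operator is a second-order operator it is determined by its values on algebra generators; therefore $\Delta_{CM}$ is the unique BV-operator on the free Gerstenhaber algebra on $H^+_*(X)$ that vanishes on $H^+_*(X)$, which is precisely the free BV-algebra on $H^+_*(X)$.

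The main obstacle is the formality theorem itself: one must manufacture a zig-zag of quasi-isomorphisms that is simultaneously compatible with the braces, the product, and the Connes-Moscovici operator, and it is here that being a \emph{double} suspension is crucial --- both the vanishing of the reduced diagonal of $C_*(\Sigma^2 X)$ and the combinatorics of the twofold cobar construction enter. Two subsidiary points also need care: identifying the homotopy $G$-algebra bracket with the Browder bracket, and checking the vanishing of $B$ on the generators at the chain level, not merely on homology, so that it transports correctly across the formality equivalence.
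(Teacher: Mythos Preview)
Your outline matches the paper's proof almost step for step: invoke formality (the paper's Theorem~\ref{th: formality}) to replace $\Omega^2 C_*(\Sigma^2 X)$ by $\Omega^2 H_*(\Sigma^2 X)$ as homotopy BV-algebras, observe that $\Delta_{CM}$ vanishes on the length-one generators by construction (the paper cites \cite[(2.22)]{ConnesMosc}), and conclude via the ``second-order'' argument that the BV-structure is the canonical one on the free Gerstenhaber algebra.

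The one place where you and the paper diverge is the step establishing that the underlying Gerstenhaber algebra is free on $H^+_*(X)$. You offer two justifications: (a) cite F.~Cohen's computation of $H_*(\Omega^2\Sigma^2|X|)$ after identifying the homotopy G-algebra bracket with the Browder bracket, or (b) read it off ``directly'' from the formal model. The paper does neither. It proves this freeness internally as a separate theorem (Theorem~\ref{th: GerstLibreCobar}), by building a model spectral sequence $E$ with $E^2 = TH^+_*(\Sigma X)\otimes SL_1 H^+_*(X)$, comparing it via the comparison theorem to the acyclic Adams--Hilton spectral sequence for $\Omega H_*(\Sigma^2 X)\otimes \Omega^2 H_*(\Sigma^2 X)$, and checking that transgressions match. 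Your option (a) is a genuine alternative route, but as you yourself note it requires the bracket identification, which is not a triviality and is not supplied anywhere in the paper; the paper's corollary that the two Gerstenhaber algebras agree is a \emph{consequence} of the internal freeness result, so invoking it as an input would be circular. Your option (b) is essentially asserting Theorem~\ref{th: GerstLibreCobar} without proof; the computation of $H_*(\Omega^2 H_*(\Sigma^2 X))$ with its $\cup_1$-induced bracket is not immediate from the primitivity of the coproduct alone.
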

Therefore 
the Batalin-Vilkovisky algebras $(H_*(\Omega^2 C_*(\Sigma^2X)),\Delta_{CM})$ and\\
$(H_*(\Omega^2|\Sigma^2 X|),\Delta)$ are isomorphic.

In characteristic 2 the homology of double loop spaces is 2-restricted Gerstenhaber algebra (see Definition \ref{de: Gerst restreinte}).%
We obtain %
\begin{theo*}[\ref{th: GerstLibreCobar1restreint}]
Let the coefficient field be $\mathbb{F}_2$. 
Then $\Omega^2 C_*(\Sigma^2X)$ has a Connes-Moscovici structure. 
The induced  $2$-restricted Gerstenhaber algebra structure on $H_*(\Omega^2 C_*(\Sigma^2 X))$ is free on the reduced homology  $H^+_*(X)$.
\end{theo*}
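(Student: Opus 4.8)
The plan is to mirror the proof of Theorem~\ref{th: BV sur 2Cobar} (the characteristic-zero case) as closely as possible, replacing the BV-algebra formalism by the $2$-restricted Gerstenhaber algebra formalism throughout. The conceptual architecture should be the same: one establishes a \emph{formality} statement asserting that $\Omega^2 C_*(\Sigma^2 X)$, equipped with its Connes--Moscovici structure, is connected by a zig-zag of quasi-isomorphisms (of the appropriate kind of algebras-up-to-homotopy) to its homology, and then one identifies that homology by a direct computation on the suspension. The first step is therefore to invoke the formality theorem alluded to at the end of the abstract; granting it, the homotopy G-algebra (hence in particular the underlying Gerstenhaber-up-to-homotopy) structure on $\Omega^2 C_*(\Sigma^2 X)$ transports to $H_*(\Omega^2 C_*(\Sigma^2 X))$ a genuine Gerstenhaber algebra structure, and the Connes--Moscovici operator transports to an operator inducing, together with the product and bracket, the $2$-restricted structure of Definition~\ref{de: Gerst restreinte}.

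Next I would pin down the underlying bigraded module. Over $\mathbb{F}_2$, the chains $C_*(\Sigma^2 X)$ on a double suspension are formal as a coalgebra in a way compatible with the relevant operadic structure, so $\Omega^2 C_*(\Sigma^2 X)$ is quasi-isomorphic to $\Omega^2 C_*(S^2 \wedge X) \simeq \Omega^2(\text{coalgebra on } \widetilde{H}_*(X))$; the homology of the double cobar construction of a double suspension is then computed by the classical (iterated) Bott--Samelson / James-type argument, giving that $H_*(\Omega^2 C_*(\Sigma^2 X))$ is, \emph{as a module}, the free object on $\widetilde{H}_*(X)$ with the degree shifts coming from the two desuspensions, i.e.\ it has one polynomial-type generator in each internal degree and one bracket-type generator. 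The point to check here is that, mod $2$, ``free'' means free with respect to the monad governing $2$-restricted Gerstenhaber algebras: the $2$-restriction (the squaring-like unary operation) is not an independent generator but is determined on decomposables by the compatibility axioms, exactly as the BV $\Delta$ is determined in characteristic zero, so the count of generators is unchanged and matches $\widetilde{H}_*(X)$.

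The third step is the comparison of structures. One has a map of $2$-restricted Gerstenhaber algebras from the free one $\mathbb{G}_2(\widetilde{H}_*(X))$ to $H_*(\Omega^2 C_*(\Sigma^2 X))$ defined on generators by the evident inclusion of $\widetilde{H}_*(X)$; it is surjective because the generators hit a generating set (previous step), and injective by a dimension count in each bidegree, using that the free $2$-restricted Gerstenhaber algebra and the target have the same Poincaré--Hilbert series — this is where the module-level computation of Step~2 is fed in. One must also verify that this map is compatible with the Connes--Moscovici operator, i.e.\ that the operator on the left-hand side is the canonical unary operation built into $\mathbb{G}_2$; this follows from the fact that the Connes--Moscovici operator is a chain-level avatar of $\Delta$ and that, on a suspension, its effect on generators is forced.

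The main obstacle I expect is the formality input and its precise operadic strength: one needs formality not merely as $E_1$- (or $E_2$-) algebras but with enough structure to carry the Connes--Moscovici operator along the zig-zag, and over $\mathbb{F}_2$ there is the extra subtlety that the relevant operad is not formal in the naive sense (the mod-$2$ Gerstenhaber operad has a nontrivial $2$-restriction and potential obstructions living in odd torsion phenomena are absent but Steenrod-type operations on the homotopy G-algebra side must be controlled). Concretely, the hard part will be showing that the $2$-restriction operation induced on homology from the Connes--Moscovici structure agrees with the universal one, rather than differing by a correction term; I would handle this by the same naturality/suspension reduction used in characteristic zero, reducing to $X = S^0$ (or to a wedge of spheres) where everything is a finite explicit computation, and then invoking naturality in $X$ and the fact that $\widetilde{H}_*(X)$ generates.
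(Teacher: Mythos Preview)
Your first step (invoking the formality theorem to replace $\Omega^2 C_*(\Sigma^2 X)$ by $\Omega^2 H_*(\Sigma^2 X)$) is exactly what the paper does. After that, however, there are two genuine problems.

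First, you misidentify the origin of the $2$-restriction. In this paper the restriction $\xi_1$ on the homology of a homotopy G-algebra is \emph{not} induced by the Connes--Moscovici operator $\Delta_{CM}$; it is induced by the $\cup_1$-square, $\xi_1(x)=x\cup_1 x$ (Proposition~\ref{prop: homologie de G alge est Gerst}). The Connes--Moscovici structure is mentioned in the statement only because it is present on $\Omega^2 C_*(\Sigma^2 X)$; the $2$-restricted Gerstenhaber structure uses only the underlying homotopy G-algebra data. So your Step~3 worry about carrying $\Delta_{CM}$ through the zig-zag, and your proposed reduction to $X=S^0$ to match the restriction with ``the universal one'', are aimed at the wrong target: what has to be checked is that the map $SL_{1r}(\iota)$ respects products, brackets, and $\cup_1$-squares, and this is automatic because it is induced by an $\infty$-morphism of homotopy G-algebras.

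Second, your Steps~2--3 are circular as written. You propose to show the comparison map is bijective by a Poincar\'e--Hilbert series count, feeding in an independent ``module-level'' computation of $H_*(\Omega^2 H_*(\Sigma^2 X))$ via a ``Bott--Samelson/James-type argument''. But the James/Bott--Samelson input gives you $H_*(\Omega H_*(\Sigma^2 X))\cong T\widetilde H_*(\Sigma X)$; it does \emph{not} by itself compute the homology of the \emph{second} cobar construction. The paper supplies exactly this missing computation, and it is the substantive part of the proof: one builds the Adams--Hilton acyclic total space $\mathcal{E}=\Omega H_*(\Sigma^2 X)\otimes \Omega^2 H_*(\Sigma^2 X)$, filters it to get a spectral sequence $\ov{E}$ with $\ov{E}^2=H_*(\Omega H_*(\Sigma^2 X))\otimes H_*(\Omega^2 H_*(\Sigma^2 X))$, and compares it to a ``model'' spectral sequence $E$ with $E^2=T\widetilde H_*(\Sigma X)\otimes SL_{1r}\widetilde H_*(X)$ whose differentials are prescribed by transgressions $t(\Sigma x)=x$, $t(\xi_0^k(\Sigma x))=\xi_1^k(x)$, $t(ad_0\cdots)=ad_1\cdots$. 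The key technical point---absent from your sketch---is that $E$ collapses to $\mathbb{F}_2$ because, using Hilton's basic products, $T\widetilde H_*(\Sigma X)$ decomposes as a tensor of exterior algebras on $[-;-]_0$- and $\xi_0$-elementary products, and $SL_{1r}\widetilde H_*(X)$ correspondingly as polynomial algebras on their transgressions, so $E$ splits into elementary acyclic pieces $\wedge(y)\otimes P(t(y))$. One then applies the comparison theorem for spectral sequences to conclude that $SL_{1r}(\iota)$ is an isomorphism. Without this (or an equivalent independent computation of the target), your dimension count has nothing to count against.
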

Therefore the restricted Gerstenhaber algebras $H_*(\Omega^2 C_*(\Sigma^2X))$ and  \\
 $H_*(\Omega^2|\Sigma^2 X|)$ are isomorphic.

The two previous theorems use the following formality theorem. 

 Let us mention that what we call a homotopy BV-algebra cf.\cite{Quesney} has to be understood as a homotopy G-algebra with a BV-operator up to a homotopy. 
 Such a structure has the Connes-Moscovici structure as an archetype. 
\begin{theo*}[\ref{th: formality}]
 Let $\Bbbk$ be any coefficient field. 
Let $C$ be a homotopy G-coalgebra with no higher structural co-operations and such that any element of $C^+$ is  primitive for the coproduct.
Then $\Omega^2 C$ and $\Omega^2 H_*(C)$, endowed with the Connes-Moscovici structure, are quasi-isomorphic as homotopy BV-algebras.
 \end{theo*}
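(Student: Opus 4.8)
\emph{Overall strategy.} I would prove formality in three stages: first show that $C$ itself is formal as a homotopy G-coalgebra, then transport the resulting comparison map through one application of the cobar functor, and finally through the second one while keeping track of the whole Connes--Moscovici structure by naturality.

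\emph{Stage 1: formality of $C$.} Since $\Bbbk$ is a field, fix a decomposition $C\cong H_*(C)\oplus D$ with $D$ acyclic and $d_C(C)\subseteq D$, and let $f\colon C\to H_*(C)$ be the associated projection; it is a quasi-isomorphism of chain complexes compatible with the coaugmentation. By hypothesis the reduced coproduct $\overline{\Delta}_C$ vanishes on $C^+$ and $C$ carries no higher structural co-operations, so the only co-operations of $C$ are $\Delta_C$ and (at most) the binary one $E^{1,1}_C$ dual to $\cup_1$; the homotopy transfer of this data endows $H_*(C)$ with the analogous minimal structure ($\overline{\Delta}$ primitive, no higher co-operations, $E^{1,1}$ induced by $E^{1,1}_C$). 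One then checks that $f$ is a \emph{strict} morphism of homotopy G-coalgebras: compatibility with $\Delta$ is immediate since both coproducts are primitive in positive degrees, and compatibility with $E^{1,1}$ follows because, $\overline{\Delta}$ being zero, the $\cup_1$-relation forces $E^{1,1}_C$ to be a cycle in $\Hom(C,C^{\ot 2})$, so the splitting above can be chosen compatibly with it; if a strict $f$ is not available one replaces it by a short zig-zag of strict quasi-isomorphisms of homotopy G-coalgebras, which is harmless for the statement.

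\emph{Stages 2 and 3: iterating the cobar.} The cobar construction is a functor from homotopy G-coalgebras to homotopy G-coalgebras (assigning to $C$ the connected dg Hopf algebra $\Omega C$ with the Baues coproduct and its induced higher co-operations) and it preserves quasi-isomorphisms between reduced objects; here this is transparent, since primitivity of $C^+$ kills the coproduct part of the cobar differential, whence $\Omega C=(T(s^{-1}C^+),d_C)$, $\Omega H_*(C)=(T(s^{-1}H^+_*(C)),0)$ and $\Omega f=T(s^{-1}f)$ is visibly a quasi-isomorphism. Applying $\Omega$ once more yields a quasi-isomorphism of dg algebras $\Omega^2 f=\Omega(\Omega f)\colon \Omega^2 C\to\Omega^2 H_*(C)$. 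The homotopy G-algebra (brace) structure on the double cobar is produced naturally out of the Hopf-algebra structure of $\Omega C$, and the Connes--Moscovici operator $\Delta_{CM}$ together with the homotopies witnessing it as a BV-operator up to a homotopy is likewise assembled functorially from that Hopf algebra (via the Connes--Moscovici cyclic operators and the antipode). Since $\Omega f$ is a morphism of dg Hopf algebras, $\Omega^2 f$ is therefore a morphism of homotopy G-algebras commuting with $\Delta_{CM}$ and with all the structural homotopies, i.e.\ a quasi-isomorphism of homotopy BV-algebras; this is the assertion.

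\emph{Main obstacle.} The crux is the naturality claim in Stage 3: one must verify that \emph{every} piece of the homotopy BV-algebra structure on $\Omega^2(-)$ --- not only the braces and $\Delta_{CM}$ but also the homotopies exhibiting $\Delta_{CM}$ as a BV-operator up to a homotopy --- is defined purely in terms of the Hopf-algebra structure of the intermediate cobar, so that applying $\Omega$ to the Hopf morphism $\Omega f$ produces a morphism of the entire structure and not merely of the underlying dg algebra. A minor secondary point is the choice in Stage 1 of a chain-level splitting compatible with $E^{1,1}_C$, or equivalently the passage to a short zig-zag.
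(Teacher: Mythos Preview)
Your strategy is sound, but you have two misreadings that inflate the difficulty. First, ``no higher structural co-operations'' means the homotopy G-coalgebra is \emph{reduced} in the paper's sense: $\nabla_1=E^{1,1}$ \emph{and} every $E^{1,k}$ for $k\ge 2$ vanish. So there is no $E^{1,1}_C$ to transport, and your projection $f\colon C\to H_*(C)$ is automatically a strict morphism of reduced homotopy G-coalgebras the moment it is a dg-coalgebra map; the zig-zag escape hatch is unneeded. Second, your ``main obstacle'' is overstated: in the paper's definition, an $\infty$-morphism of homotopy BV-algebras is an $\infty$-morphism of homotopy G-algebras whose linear component commutes with the BV-operator --- there is no compatibility condition with the homotopy $H$. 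So you only need that Kadeishvili's brace operations $E_{1,k}$ on $\Omega(-)$ and the Connes--Moscovici operator are natural for morphisms of involutive Hopf dg-algebras, which they are, being given by explicit formulas in the Hopf operations (including the antipode).

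\textbf{Comparison with the paper.} The paper reaches the same strict morphism $\Omega p\colon(\Omega C,\nabla_0)\to(\Omega H_*(C),H_*(\nabla_0))$ but via two rounds of the homotopy transfer theorem for $A_\infty$-coalgebras. A contraction $(C,d)\rightleftarrows(H_*(C),0)$ with homotopy $\nu$ is fixed; primitivity of $C^+$ and the gauge condition $p\nu=0$ force all higher transferred co-operations $\delta_i$ and all higher components $\tau_i$ of the comparison twisting cochain to vanish, so $p$ is a strict dg-coalgebra map. This produces an explicit contraction $(\Omega C,d)\rightleftarrows(\Omega H_*(C),0)$ with homotopy $\Gamma$, and HTT is applied again to transfer Baues' coproduct $\nabla_0$. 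The key point is that $\nabla_0$, for a reduced homotopy G-coalgebra, is the unshuffle coproduct (deconcatenations followed by permutations), so every summand of $\nabla_0\Gamma$ retains a factor of $\nu$ and is annihilated by post-composition with $\Omega p$; hence again the transferred structure and the morphism are strict. Your route bypasses HTT entirely by observing directly that the structures involved are trivial and invoking functoriality, which is cleaner here; the paper's HTT argument, by contrast, is what one would need if the coproduct on $C$ were primitive only up to coherent homotopy, and it also supplies an explicit contracting homotopy $\Gamma$ on $\Omega C$ rather than merely a quasi-isomorphism.
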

The main example of such a $C$ is the simplicial chain complex of a double suspension.
\newpage

The paper is organized as follows.

In the first section: we fix notations, we recall both the cobar construction and what are homotopy G-(co)algebras.
The main result is the Proposition \ref{prop: reduced G-cog}, coming from \cite{Quesney}, that claims the existence of the Connes-Moscovici structure on the double cobar construction of a double suspension.
The second section is devoted to the formality theorem.
The third section described the $2$-restricted Gerstenhaber algebras coming from the homology of a homotopy G-algebra.
In the fourth section we prove that, in characteristic two, the homology of the double cobar construction of a double suspension is a $2$-restricted Gerstenhaber algebra.
In the last section the coefficient field is $\mathbb{Q}$; 
we prove that the homology of the double cobar construction of a double suspension is BV-algebra.
In the appendix we write the relations among the structural co-operations of homotopy G-coalgebras.

\section{A few algebraic structures on the cobar construction}
This section does not contain new results. 
Here we recall a few algebraic structures on both the cobar and the double cobar constructions.
The main result of this section shows that 
the double cobar construction of an \emph{involutive} homotopy G-coalgebra has a \emph{Connes-Moscovici structure}. 
Roughly speaking, a Connes-Moscovici structure is a \emph{homotopy G-algebra} structure together with a BV-operator up to a  homotopy. The terminology Connes-Moscovici structure  refers to the boundary operator defined on the cobar construction of an involutive Hopf algebra in \cite{ConnesMosc}; it is here the BV-operator.
\\

The cobar construction is a functor
\begin{align*} 
 \Omega : DGC &\to DGA%
 \end{align*}
from the category of $1$-connected differential graded coalgebras to the category of connected differential graded algebras. By a $1$-connected dg-coalgebra $C$ we mean a co-augmented co-unital (of co-unit $\epsilon$) dg-coalgebra such that $C_n=0$ if $n<0$ or $n=1$ and $C_0=\Bbbk$.  As a vector space, the cobar construction of a dg-coalgebra $(C,\epsilon, d_C,\nabla_C)$ is defined as the free tensor algebra $T(\ds C^+)$. Here, $\ds$ is the usual desuspension, $(\ds M)_n=M_{n+1}$, and $C^+$ is the reduced dg-coalgebra $C^+=\Ker(\epsilon)$.
The differential $d_{\Omega}$  of the cobar construction $\Omega C$ is given as the unique derivation such that its restriction to $\ds C^+$ is 
\begin{align*}
 d_{\Omega}(\ds c)= -\ds d_{C^+}(c)+ (\ds\ot \ds)\nabla_{C^+} (c)~~~ \forall c\in C^+.
\end{align*}

The cobar construction of a dg-coalgebra $C$ can be enriched with a Hopf dg-algebra structure for example when $C$ is endowed with a \emph{homotopy G-\textbf{co}algebra} structure, cf. \cite{Kadeishvili-Mesuring}. 
In this case the resulting double cobar construction $\Omega^2 C=\Omega (\Omega C)$ has a structure of homotopy G-algebra, cf. \cite{Kadeishvili}.
\begin{center}
 \scalebox{0.83}{\begin{tikzpicture} [>=stealth,thick,draw=black!50, arrow/.style={->,shorten >=1pt}, point/.style={coordinate}, pointille/.style={draw=red, top color=white, bottom color=red},
nonterminal/.style={rectangle,minimum size=6mm,very thick,draw=red!50!black!70,top color=white,bottom color=red!60!black!20, }]
\matrix[row sep=11mm,column sep=8mm,ampersand replacement=\&]
{ 
 \node [nonterminal,text width=1cm,text centered]  (b0) {$C$}; \& \node  [nonterminal,text width=1cm,text centered]  (c0){$\Omega C$};\& \node[nonterminal,text width=1cm,text centered]  (d0){$\Omega^2 C$};\& \node [nonterminal,text width=1.5cm,text centered] (e0){$H_*(\Omega^2 C)$};\\
 \node [nonterminal,text width=2.5cm,text centered] (b1) {Homotopy G-coalgebra}; \& \node[nonterminal,text width=2.5cm,text centered]  (c1){Hopf dg-algebra};\& \node [nonterminal,text width=2.5cm,text centered] (d1){Homotopy G-algebra};\& \node [nonterminal,text width=2.5cm,text centered] (e1){Gerstenhaber algebra};\\
 }; 
\path
  	  (b1)    edge[above,double,arrow,->]     node {}  (c1)
 	  (c1)    edge[above,double,arrow,->]     node {}  (d1)    
 	 (d1)    edge[above,double,arrow,->]     node {}  (e1)   
        ; 
\end{tikzpicture}
}
\end{center}

A homotopy G-coalgebra structure on a dg-coalgebra $C$ is the data of a coproduct $\nabla: \Omega C \to \Omega C \ot \Omega C$ such that:
\begin{itemize}
 \item $(\Omega C,\nabla)$ is a (co-unitary) dg-bialgebra;
\item  $\nabla$ satisfies the  left-sided condition \eqref{eq: left-side condition} defined in the appendix.
\end{itemize}
The coproduct $\nabla$ being dg-algebra morphism it corresponds to a unique twisting cochain $E: C^+ \to \Omega C \ot \Omega C$.
We note 
\[
 E^{i,j}: C^+\to (C^+)^{\ot i} \ot  (C^+)^{\ot j}
\]
its $(i,j)$-component.
The left-sided condition \eqref{eq: left-side condition} says exactly that $E^{i,j}=0$ when $i\geq 2$.
Both the coassociativity of $\nabla$ and the compatibility with the differential $d_{\Omega}$ lead to relations among the $E^{1,k}$'s. For its particular importance, we distinguish the co-operation $E^{1,1}$ and we denote it by $\nabla_1$.
In particular:
\begin{itemize}
 \item $\nabla_1$ is a chain homotopy for the cocommutativity of the coproduct of $C$;
\item the coassociativity of $\nabla_1$ is controlled by the co-operation $E^{1,2}$. 
\end{itemize}
We refer to the appendix for the complete relationship among these co-operations.
\\

Dually, a homotopy G-algebra structure on a dg-algebra $A$ is the data of a product $\mu: \Ba A \ot \Ba A \to \Ba A$ on the bar construction (see \cite{Kadeishvili}) 
such that:
\begin{itemize}
 \item $(\Ba A,\mu)$ is a (unital) dg-bialgebra;
\item  $\mu$ satisfies a right-sided condition, see \cite[3.1 and 3.2]{LodayRonco}.
\end{itemize}
The product $\mu$ being dg-coalgebra morphism it corresponds to a unique twisting cochain $E: \Ba A \ot \Ba A \to A^+$.
We note 
\[
 E_{i,j}:  (A^+)^{\ot i} \ot  (A^+)^{\ot j}\to A^+
\]
its $(i,j)$-component.
The right-sided condition \cite[3.1]{LodayRonco} says exactly that $E_{i,j}=0$ when $i\geq 2$.
The associativity of $\mu$ and the compatibility with the differential $d_{\Ba}$ of the bar construction leads to relations among the $E_{k,1}$'s.
We distinguish the operation $E_{1,1}$ and we denote it by $\cup_1$.
In particular:
\begin{itemize}
 \item $\cup_1$ is a chain homotopy for the commutativity of the product of $A$;
\item the associativity of $\cup_1$ is controlled by the operation $E_{1,2}$;
\item $\cup_1$ is a left derivation for the product but it is a right derivation up to homotopy for the product.
\end{itemize}
We refer to \cite{Quesney} for explicit relations among the operations, or to \cite{Kadeishvili} in characteristic two.
However, for later use, let us write precisely the three previous facts in characteristic two:
 \begin{align}
  a b -b a&=(\ov{\partial} \cup_1) (a\ot b),\label{diff11}\\
 (a\cup_1 b)\cup_1 c - a\cup_1 (b\cup_1 c)& = E_{1,2}(a;b,c)+E_{1,2}(a;c,b),\label{assoE}\\
  a b\cup_1 c-a(b \cup_1 c)-(a\cup_1 c)b & = 0\label{der11},\\
  a\cup_1 bc -(a\cup_1 b)c - b(a\cup_1 c) & = \ov{\partial} E_{1,2}(a\ot b\ot c),\label{der21}
\end{align}
for all $a,b,c \in \La^+$ and where, for $n\geq 1$, $\ov{\partial}(f)=d_{\La}f-fd_{\La^{\ot n}}$ is the usual differential of  $\Hom(\La^{\ot n},\La)$.
\\
  
 A. Connes and H. Moscovici \cite[(2.21)]{ConnesMosc} defined a boundary operator on the cobar construction of an involutive Hopf algebra (that is a Hopf algebra with an involutive antipode). 
This operator, named 
 the  Connes-Moscovici operator, induces a BV-algebra structure on the homology of such a cobar construction, \cite{MenichiBV}.
In \cite{Quesney} the author showed that the cobar construction of an involutive Hopf dg-algebra $\Hopf$ is endowed with the \emph{Connes-Moscovici structure}. The latter lifts the BV-algebra structure obtained by L. Menichi \cite{MenichiBV} on $H_*(\Omega \Hopf)$.
Moreover, the homotopy G-coalgebras such that the antipode on the resulting cobar construction is involutive was characterized as the \emph{involutive} homotopy G-coalgebras.
\begin{center}
 \scalebox{0.9}{\begin{tikzpicture} [>=stealth,thick,draw=black!50, arrow/.style={->,shorten >=1pt}, point/.style={coordinate}, pointille/.style={draw=red, top color=white, bottom color=red},
nonterminal/.style={
rectangle,
minimum size=6mm,
very thick,
draw=red!50!black!70,
top color=white,
bottom color=red!60!black!20,}]
\matrix[row sep=11mm,column sep=8mm,ampersand replacement=\&]
{ 
 \node [nonterminal,text width=1cm,text centered]  (b0) {$C$}; \& \node  [nonterminal,text width=1cm,text centered]  (c0){$\Omega C$};\& \node[nonterminal,text width=1cm,text centered]  (d0){$\Omega^2 C$};\& \node [nonterminal,text width=1.5cm,text centered] (e0){$H_*(\Omega^2 C)$};\\
 \node [nonterminal,text width=2.5cm,text centered] (b1) {\textcolor{red}{Involutive} homotopy G-coalgebra }; \& \node[nonterminal,text width=2.5cm,text centered]  (c1){ \textcolor{red}{Involutive} Hopf dg-algebra};\& \node [nonterminal,text width=2.5cm,text centered] (d1){ Connes-Moscovici structure};\& \node [nonterminal,text width=2.5cm,text centered] (e1){ \textcolor{red}{BV}-algebra};\\
 }; 
\path
  	  (b1)    edge[above,double,arrow,->]     node {}  (c1)
  	  (c1)    edge[above,double,arrow,->]     node {}  (d1)    
  	  (d1)    edge[above,double,arrow,->]     node {}  (e1)   
	  (c1)    edge[double,arrow,out=50, in=120, looseness=0.4,->]   node[label=above:{\scriptsize L. Menichi '04}] {} (e1)
        ; 
\end{tikzpicture}
}
\end{center} 
\begin{de}
Let $\Hopf$ be an involutive Hopf dg-algebra $\Hopf$. 
 The Connes-Moscovici structure on the cobar construction $\Omega \Hopf$ is the homotopy G-algebra structure  $(\Omega \Hopf, \cup_1, \{E_{1,k}\}_{k\geq 2})$ given in \cite{Kadeishvili} together with the Connes-Moscovici operator $\Delta_{CM}:\Omega \Hopf \to \Omega \Hopf$.
\end{de}
It satisfies the following properties: 
\begin{align}
 &\Delta_{CM}^2=0;\label{eq: CM1}\\
 &(-1)^{|a|}\bigl( \Delta_{CM}(ab)- \Delta_{CM}(a)b - (-1)^{|a|} a\Delta_{CM}(b)\bigr) = \{ a;b\} + \ov{\partial} H(a;b),\label{eq: CM2}
\end{align}
for all homogeneous elements $a,b \in \Omega \Hopf$, where $H$ is a chain homotopy of degree $2$ and 
$\{ a;b\}:= a\cup_1 b -(-1)^{(|a|+1)(|b|+1)} b\cup_1 a$.
\\

The example of homotopy G-coalgebras $C$ that we consider is the simplicial chain complex of a simplicial set.
\\ 
Given a pointed simplicial set $X$, we denote by $C_*(X)$ the simplicial chain complex on $X$. 
The Alexander-Whitney coproduct makes $C_*(X)$ into a dg-coalgebra.
We denote by $|X|$ a geometric realization of $X$. 

H.-J. Baues \cite{Bauesdouble,Bauesgeom} constructed a coproduct 
\[
  \nabla_0:\Omega C_*(X)\to \Omega C_*(X) \ot \Omega C_*(X)
 \]
 on the cobar construction $\Omega C_*(X)$ when the $1$-skeleton of $|X|$ is a point.
It satisfies:
  \begin{itemize}
  \item  $(\Omega C_*(X),\nabla_0)$ is a Hopf dg-algebra;
 \item $H_*(\Omega^2 C_*(X))\cong H_*(\Omega^2 |X|)$ when the $2$-skeleton of $|X|$ is a point.
 \end{itemize}
 This coproduct corresponds to a homotopy G-coalgebra structure on $C_*(X)$.
 \begin{de}
  A homotopy G-coalgebra $(C,\nabla_1,\{E^{1,k}\}_{k\geq 2})$ is reduced if $\nabla_1$ and $E^{1,k}$ for all ${k\geq 2}$ are zero.
 \end{de}
\begin{prop}{\cite[Theorem 4.6]{Quesney}}\label{prop: reduced G-cog}
 Let $\Sigma^2 X$ be a double simplicial suspension. Then
 \begin{itemize}
 \item  Baues' coproduct determines a reduced (hence involutive) homotopy G-coalgebra structure $C_*(\Sigma^2 X)$;
  \item the double cobar construction $\Omega^2 C_*(\Sigma^2 X)$ has the Connes-Moscovici structure.
 \end{itemize} 
 \end{prop}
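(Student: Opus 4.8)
The plan is to prove the two bullets in turn, noting that the second is essentially formal once the first is in place. Indeed, if $C_*(\Sigma^2 X)$ is shown to be an \emph{involutive} homotopy G\nobreakdash-coalgebra, then by the material recalled at the start of this section the cobar construction $\Omega C_*(\Sigma^2 X)$ is an involutive Hopf dg\nobreakdash-algebra, hence carries the Connes--Moscovici operator $\Delta_{CM}$ together with the homotopy G\nobreakdash-algebra operations $\cup_1,\{E_{1,k}\}_{k\geq 2}$ of \cite{Kadeishvili}; this is precisely the Connes--Moscovici structure on $\Omega^2 C_*(\Sigma^2 X)=\Omega(\Omega C_*(\Sigma^2 X))$. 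So I would concentrate on the first bullet, and in fact on showing that Baues' coproduct $\nabla_0$ endows $C_*(\Sigma^2 X)$ with a homotopy G\nobreakdash-coalgebra structure that is \emph{reduced}, i.e.\ with $\nabla_1=E^{1,1}=0$ and $E^{1,k}=0$ for all $k\geq 2$; involutivity will then follow because a reduced structure makes $\Omega C_*(\Sigma^2 X)$ primitively generated, so that its antipode acts by $-\mathrm{id}$ on $\ds C^+$ and is in particular involutive — which is exactly the characterization of involutive homotopy G\nobreakdash-coalgebras recalled above.

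First I would fix an explicit model of the double suspension for which Baues' hypotheses hold — for instance an iterated (Kan) suspension, so that $C_*(\Sigma^2 X)$ is concentrated in degrees $0$ and $\geq 2$ and the $1$\nobreakdash-skeleton of $|\Sigma^2 X|$ is a point — and then record the non-degenerate simplices of $\Sigma^2 X$ together with the Alexander--Whitney coproduct on $C_*(\Sigma^2 X)$. The crucial elementary input, which I would verify by inspection of this model, is that \emph{every element of $C^+_*(\Sigma^2 X)$ is primitive}: in the Alexander--Whitney sum $\Delta(\sigma)=\sum_i \sigma|_{[0\dots i]}\ot\sigma|_{[i\dots n]}$ attached to a non-degenerate simplex $\sigma$ of the double suspension, each intermediate front-/back-face is degenerate or maps to the base point, because a proper face cannot carry both suspension coordinates. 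Thus $\nabla_{C^+}=0$.

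The core of the argument is then to feed this into Baues' construction. The co-operations $\nabla_1$ and $E^{1,k}$ are the $(1,1)$- and $(1,k)$-components of the twisting cochain $E\colon C^+\to \Omega C\ot\Omega C$ associated with $\nabla_0$, and Baues gives $\nabla_0$ by explicit combinatorial (cubical) formulas on the generators $\ds c$, $c\in C^+$. I would unwind these formulas and observe that every term occurring in $\nabla_1(\ds c)$ and in $E^{1,k}(\ds c)$ involves, as a tensor factor, one of the intermediate faces shown above to be degenerate or basepoint-valued; hence all these components vanish identically on $C_*(\Sigma^2 X)$, and the structure is reduced. This verification is where I expect the real work — and the main obstacle — to lie, since it requires handling Baues' somewhat intricate description of $\nabla_0$ with care. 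If the direct computation proves too unwieldy, a possible shortcut is to argue more abstractly that a homotopy G\nobreakdash-coalgebra whose underlying coalgebra has vanishing reduced coproduct and is concentrated in degrees $0$ and $\geq 2$ is forced to be reduced: the structural co-operations are natural (co)chain homotopies built out of $\nabla_{C^+}$ and its iterates, and there is no room for a nonzero value in the relevant degrees.

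Once reducedness is established, $\nabla_0$ is the unique dg\nobreakdash-algebra-map coproduct on $\Omega C_*(\Sigma^2 X)$ with $\nabla_0(\ds c)=\ds c\ot 1+1\ot\ds c$, so $\Omega C_*(\Sigma^2 X)$ is a primitively generated Hopf dg\nobreakdash-algebra with involutive antipode; by the characterization of involutive homotopy G\nobreakdash-coalgebras of \cite{Quesney} this means $C_*(\Sigma^2 X)$ is one. The conclusion about the Connes--Moscovici structure on $\Omega^2 C_*(\Sigma^2 X)$ then follows as explained in the first paragraph.
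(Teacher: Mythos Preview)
The paper does not actually prove this proposition: it is imported verbatim from \cite[Theorem 4.6]{Quesney} and stated without proof (the section explicitly ``does not contain new results''). So there is no argument in the present paper to compare your sketch against. The only remark the paper adds, immediately after the statement, is precisely your key input: the Alexander--Whitney coproduct on $C_*(\Sigma^2 X)$ is primitive (already for a single suspension $\Sigma X$).

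That said, your outline is a faithful reconstruction of how the cited result is obtained. The logical skeleton --- primitivity of $\nabla_{C^+}$ forces Baues' higher co-operations $\nabla_1, E^{1,k}$ to vanish, hence the structure is reduced; reduced implies $\Omega C$ is primitively generated with involutive antipode, hence $C$ is an involutive homotopy G-coalgebra; then the general machinery yields the Connes--Moscovici structure on $\Omega^2 C$ --- is exactly the strategy of \cite{Quesney}. Your caution about the ``real work'' lying in unwinding Baues' cubical formulas is well placed: that is indeed where the cited paper spends its effort, and your proposed shortcut (arguing abstractly from $\nabla_{C^+}=0$ and connectivity that the structural co-operations must vanish) is closer to how the argument is actually organized there than a brute-force expansion would be.
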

 The Alexander-Whitney coproduct on $C_*(\Sigma^2 X)$ is primitive, that is all the elements of $C^+_*(\Sigma^2 X)$ are primitive for the coproduct. In fact this is already the case for a suspension $\Sigma X$.
 
\section{Formality theorem}\label{sec: formality th}
In order to prove a formality theorem for the double cobar construction endowed with the Connes-Moscovici structure we need to specify the category of what we call the homotopy BV-algebras. 

\begin{de}
 A homotopy BV-algebra is a homotopy G-algebra $(A,\cup_1,\{E_{1,k}\}_{k\geq 2})$ with a degree $1$ operator $\Delta$ satisfying:
\begin{align}
 &\Delta^2=0;\label{eq: hBV1}\\
 &(-1)^{|a|}\bigl( \Delta(ab)- \Delta(a)b - (-1)^{|a|} a\Delta(b)\bigr) = \{ a;b\} + \ov{\partial} H(a;b),\label{eq: hBV2}
\end{align}
for all homogeneous elements $a,b \in A$, where $H$ is a chain homotopy of degree $2$ and 
$\{ a;b\}:= a\cup_1 b -(-1)^{(|a|+1)(|b|+1)} b\cup_1 a$.
\\
An $\infty$-morphism between two homotopy BV-algebras $A$ and $A'$ is defined as follows.
It is:
\begin{itemize}
 \item a morphism of dg-bialgebras $f:\Ba A\to \Ba A'$ between the associated bar constructions (i.e. an $\infty$-morphism of homotopy G-algebras), its components are $f_n: A^{\ot n} \to A'$, $n\geq 1$; and,
 \item the linear map $f_1: A \to A'$ commutes with the BV-operators, i.e. $f_1\Delta = \Delta f_1$.
\end{itemize}
\end{de}
Here we prove that the double cobar construction of a reduced homotopy G-coalgebra is formal as homotopy BV-algebra in the following sense:
\begin{theo}\label{th: formality}
 Let $\Bbbk$ be any coefficient field. 
  Let $C$ be a reduced (hence involutive) homotopy G-coalgebra with a coproduct such that all the elements of $C^+$ are primitive.
 Then $\Omega^2 C$ and $\Omega^2 H_*(C)$, endowed with the Connes-Moscovici structure, are quasi-isomorphic as homotopy BV-algebras.
 \end{theo}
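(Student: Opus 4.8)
The plan is to produce an honest morphism of homotopy BV-algebras $\Omega^2\iota\colon\Omega^2 H_*(C)\to\Omega^2 C$ (and, symmetrically, one in the opposite direction) which is a quasi-isomorphism, by transporting a chain-level splitting of $C^+$ through two applications of the cobar functor. The conceptual point is that the hypotheses force every structure in sight to be the \emph{free} one attached to the bare chain complex $C^+$, so the theorem reduces to the formality of a chain complex over a field. Concretely: since $C$ is a reduced homotopy G-coalgebra and every element of $C^+$ is primitive, the twisting cochain defining the coproduct $\nabla$ on $\Omega C$ collapses to its primary part, so $\Omega C=T(\ds C^+)=\bigoplus_n(\ds C^+)^{\ot n}$ is the free tensor algebra, with differential the unique derivation sending $\ds c$ to $-\ds d_{C^+}(c)$ (no quadratic term), with the unshuffle coproduct whose space of primitives is $\ds C^+$, and with the unique anti-automorphism antipode that is $-\mathrm{id}$ on $\ds C^+$; in particular it is involutive, consistently with Proposition~\ref{prop: reduced G-cog}. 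The same description holds for $H_*(C)$, whose induced homotopy G-coalgebra structure is again reduced, now with zero differential and (literally) zero coproduct on $H_*(C)^+$.

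First I would choose, using that $\Bbbk$ is a field, linear maps $\iota\colon H_*(C)^+\to C^+$ and $\pi\colon C^+\to H_*(C)^+$ with $\pi\iota=\mathrm{id}$, $d_C\iota=0=\pi d_C$ and $\iota\pi=\mathrm{id}-(d_Ch+hd_C)$ for some degree $1$ map $h$, extended by the identity on $\Bbbk$. Because the reduced coproducts of $C$ and of $H_*(C)$ both vanish, $\iota$ and $\pi$ are automatically morphisms of dg-coalgebras; and by the previous paragraph the induced algebra maps $\Omega\iota$ and $\Omega\pi$ send primitives to primitives, hence intertwine the two primitively generated coproducts and antipodes, so $\iota,\pi$ are morphisms of (involutive) homotopy G-coalgebras and $\Omega\iota\colon\Omega H_*(C)\to\Omega C$, $\Omega\pi\colon\Omega C\to\Omega H_*(C)$ are morphisms of Hopf dg-algebras, both sides being involutive by Proposition~\ref{prop: reduced G-cog}. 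They are quasi-isomorphisms: the only differential on $\Omega C=\bigoplus_n(\ds C^+)^{\ot n}$ is the tensor-product differential, so by K\"unneth $H_*(\Omega C)=T(\ds H_*(C)^+)=\Omega H_*(C)$ and $\Omega\iota,\Omega\pi$ realise this identification.

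Next I would invoke functoriality of the Connes-Moscovici structure: a morphism $f\colon\mathcal{H}\to\mathcal{H}'$ of (involutive) Hopf dg-algebras induces a morphism of the Connes-Moscovici structures on the associated cobar constructions. Its homotopy G-algebra part is $\Omega f\colon\Omega\mathcal{H}\to\Omega\mathcal{H}'$, a morphism of homotopy G-algebras by the functoriality of the construction of \cite{Kadeishvili} (the product on the bar construction of $\Omega\mathcal{H}$ is built from the coproduct of $\mathcal{H}$, which $f$ respects); and the Connes-Moscovici operator $\Delta_{CM}$, as well as the degree $2$ homotopy $H$ of \eqref{eq: CM2}, is assembled solely from the product, coproduct, unit, counit and antipode of $\mathcal{H}$, hence is intertwined by $f$ on the nose. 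Applying this to $f=\Omega\iota$ and $f=\Omega\pi$ shows that $\Omega^2\iota$ and $\Omega^2\pi$ are morphisms of homotopy BV-algebras; in particular their linear components commute with $\Delta_{CM}$. Finally, applying the cobar functor once more to the quasi-isomorphism $\Omega\iota$ of ($1$-connected, in the conventions under which $\Omega^2 C$ is defined in Proposition~\ref{prop: reduced G-cog}) dg-coalgebras keeps it a quasi-isomorphism: filter both sides by tensor-word length and compare the resulting Eilenberg--Moore spectral sequences, which agree from the $E^1$-page on. Hence $\Omega^2\iota\colon\Omega^2 H_*(C)\to\Omega^2 C$ (and symmetrically $\Omega^2\pi$) is a quasi-isomorphism of homotopy BV-algebras, which proves the theorem.

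The step I expect to require real care is the verification in the third paragraph that the \emph{entire} Connes-Moscovici structure — not just the homotopy G-algebra operations $\cup_1$ and $\{E_{1,k}\}_{k\geq 2}$, but also the BV-operator $\Delta_{CM}$ and the homotopy $H$ — is natural with respect to morphisms of involutive Hopf dg-algebras, so that $\Omega^2\iota$ genuinely lands in the category of homotopy BV-algebras as defined (in particular that the linear-part condition $f_1\Delta=\Delta f_1$ holds strictly). A secondary, purely bookkeeping, issue is the well-definedness of the iterated cobar construction when $\Omega C$ is only connected rather than strictly $1$-connected; this is already handled by the framework underlying Proposition~\ref{prop: reduced G-cog}. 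Everything else is the soft observation that the reduced and primitivity hypotheses make all the constructions the free ones attached to the chain complex $C^+$, together with the formality of chain complexes over a field.
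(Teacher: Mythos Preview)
Your proof is correct and arrives at the same strict quasi-isomorphisms as the paper, but by a more direct route. The paper runs the homotopy transfer theorem for $A_\infty$-coalgebras twice---first from $C$ to $H_*(C)$, then from $(\Omega C,\nabla_0)$ to $\Omega H_*(C)$---and in each case checks, using primitivity of the coproduct together with the gauge condition $p\nu=0$, that every higher transferred co-operation $\partial_i$ ($i\geq 2$) and every higher component $\tau_i$ ($i\geq 1$) of the comparison $A_\infty$-morphism vanishes; this certifies that the resulting morphisms are the strict maps $\Omega p$, $\Omega^2 p$. You short-circuit that machinery by observing up front that, because the reduced coproduct on $C^+$ is zero and the Hopf structure on $\Omega C$ is the primitively generated unshuffle one, any chain-level splitting is already a strict morphism of homotopy G-coalgebras, so $\Omega\iota$ and $\Omega\pi$ are strict involutive Hopf dg-algebra morphisms on the nose. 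Your argument is more elementary and makes transparent that the theorem is really just ``chain complexes over a field are formal''; the paper's HTT framework is more mechanical but manufactures the explicit contraction $\Gamma$ on $\Omega C$ and would adapt to situations where the coproducts are not literally trivial. Both proofs leave the final step---that $\Omega^2 p$ (your $\Omega^2\pi$) respects $\Delta_{CM}$ and the homotopy $H$---to the naturality of the Connes-Moscovici construction under morphisms of involutive Hopf dg-algebras; you are right to flag this as the one place deserving an explicit check, and the paper is in fact no more explicit about it than you are.
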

This theorem relies on the homotopy transfer theorem (HTT for short) for $A_{\infty}$-coalgebras.

The latter transfers the $A_{\infty}$-coalgebra structure along a contraction:
\begin{center}
\begin{tikzpicture} [>=stealth,thick,draw=black!50, arrow/.style={->,shorten >=1pt}, point/.style={coordinate}, pointille/.style={draw=red, top color=white, bottom color=red}]
\matrix[row sep=9mm,column sep=16mm,ampersand replacement=\&]
{
\node (a){$(C,d)$}; \&  \node (b) {$(H,d)$};\\
}; 
\path
(a)     edge[above,arrow,out=30,in=140,looseness=0.8]      node{\scriptsize$p$} (b)
        (b)     edge[above,arrow,out=210, in=320, looseness=0.8]      node{\scriptsize$i$} (a)
	(a)  	   edge[left,arrow,loop above,out=160,in=200,looseness=4]      node[yshift=0.1cm,xshift=-0.1cm]{\scriptsize$\nu$}  (a)
           	; 
\end{tikzpicture}
\end{center}
where $p$ and $i$ are chain maps and $\nu$ is a chain homotopy such that:
\begin{align*}
 pi=Id\\
ip-Id = d\nu + \nu d \\
p\nu=\nu i=\nu^2=0 &~~ \text{(Gauge conditions).}
\end{align*} 
\begin{theo}[Transfer Theorem]\cite{Gugenheim}
 Let $(C,d,\nabla)$ be a connected dg-coalgebra such that $C_1=0$, and $(H,d)$ a dg-vector space. We suppose them related  by a contraction as above.
Then there exists on $H$ an $A_{\infty}$-coalgebra structure $(H,\partial_i)$, and also an $A_{\infty}$-quasi-isomorphism
\[
 f:(C,d,\nabla)\to (H,\partial_i).
\]
The $\partial_i$ are given by:
\begin{align*}
 \partial_0&= d_H \\
\partial_1&=  (p\ot p)\nabla i \\
\partial_2&=p^{\ot 3}(\nabla \nu\ot Id - Id \ot \nabla \nu)\nabla  i,
\end{align*}
and more generally, for $i\geq 2$,
\begin{multline*}
\partial_i= \sum_{\substack{0\leq k_{i-1}\leq i-1 \\ 0\leq k_{i-2}\leq i-2 \\ \cdots \\ 0\leq k_{1}\leq 1 }}\pm  p^{\ot i+1}(Id^{\ot k_{i-1}}\ot \nabla \nu\ot Id^{\ot i-1-k_{i-1}})(Id^{\ot k}\ot \nabla \nu\ot Id^{\ot i-1-k})...\\ ...(\nabla \nu\ot Id - Id \ot \nabla \nu)\nabla i.
\end{multline*}
The quasi-isomorphism $f$ is equivalent to a twisting cochain $\tau: C \to \Omega H$ with components $\tau=\sum_{i\geq 0}\tau_i$ where $\tau_i:C\to H^{\ot i+1}$ is:
\begin{align*}
 \tau_0&= p \\
\tau_1&=  (p\ot p)\nabla \nu \\
\tau_2&=p^{\ot 3}(\nabla\nu\ot Id - Id \ot \nabla\nu)\nabla \nu
\end{align*}
and more generally, for $i\geq 2$,
\begin{multline*}
\tau_i= \sum_{\substack{0\leq k_{i-1}\leq i-1 \\ 0\leq k_{i-2}\leq i-2 \\ \cdots \\ 0\leq k_{1}\leq 1 }}\pm p^{\ot i+1}(Id^{\ot k_{i-1}}\ot \nabla\nu\ot Id^{\ot i-1-k_{i-1}})(Id^{\ot k}\ot \nabla\nu\ot Id^{\ot i-1-k})...\\ ...(\nabla\nu\ot Id - Id \ot \nabla\nu)\nabla \nu.
\end{multline*}
\end{theo}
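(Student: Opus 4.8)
The plan is to obtain the theorem as an instance of the Homological Perturbation Lemma (HPL), applied not to $C$ and $H$ directly but to their cobar constructions; this is the perturbation-theoretic route behind Gugenheim's argument. Recall that an $A_\infty$-coalgebra structure $(H,\partial_i)$ is the same datum as a square-zero derivation differential on the tensor algebra $\Omega H=T(\ds H)$, and that the strict dg-coalgebra $(C,d,\nabla)$ equips $\Omega C=T(\ds C^+)$ with the cobar differential $d_\Omega=d_1+d_2$, where $d_1$ is the derivation extending $-\ds d_C$ and $d_2$ the derivation extending $(\ds\ot\ds)\nabla$. The crucial point is that $d_2$ strictly raises the word length, so it is a ``small'' perturbation of $d_1$ for the tensor-length filtration.

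First I would promote the contraction $(C,d)\rightleftarrows(H,d)$ to a contraction of tensor algebras carrying only their \emph{linear} differentials. Using the tensor trick one sets $P=\bigoplus_n(\ds p)^{\ot n}$ and $I=\bigoplus_n(\ds i)^{\ot n}$, and builds the tensor homotopy $N$ out of $\ds\nu$, $\ds(ip)$ and identities in the standard way, so that $IP-Id=d_1N+Nd_1$; the gauge conditions $p\nu=\nu i=\nu^2=0$ then lift to the analogous side conditions for $(P,I,N)$. One regards $d_2$ as a perturbation of $d_1$ on $\Omega C$. Because $d_2$ raises word length by one while $N$ preserves it, the composite $d_2N$ raises length by one; and since $C_1=0$ forces $\ds C^+$ to be concentrated in degrees $\geq 1$, a word of length $\ell$ has degree $\geq\ell$, so in each fixed total degree only finitely many iterations $(d_2N)^k$ contribute. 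Thus the perturbation is locally nilpotent and the HPL applies.

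The HPL transports $d_1+d_2$ to the differential $D=d_H+P\bigl(\sum_{k\geq 0}(d_2N)^k\bigr)d_2I$ on $T(\ds H)$, which is automatically a square-zero derivation, i.e.\ an $A_\infty$-coalgebra structure on $H$; decomposing $D$ by arity yields the $\partial_i$. Expanding the geometric series and feeding each factor through $N$ produces iterated insertions of $\nabla\nu$ between copies of $Id$, with $\nabla i$ at the innermost slot and $p^{\ot i+1}$ outside, reproducing the stated formulas. The accompanying comparison map $P\bigl(\sum_{k\geq 0}(d_2N)^k\bigr)$, restricted to the length-one part $\ds C^+$ and read through the cobar adjunction, is exactly the twisting cochain $\tau:C\to\Omega H$; its components end on the homotopy $\nabla\nu$ rather than on the structure arrow $\nabla i$, which is precisely the displayed difference between $\partial_i$ and $\tau_i$ (e.g.\ $\partial_1=(p\ot p)\nabla i$ versus $\tau_1=(p\ot p)\nabla\nu$). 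That $f$ is an $A_\infty$-quasi-isomorphism is built into the HPL, since $P$ and $I$ remain quasi-isomorphisms and restrict to $p$ and $i$ in arity one, and the $A_\infty$-relations hold for free because $D^2=0$.

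The main obstacle is the sign bookkeeping. The alternating pattern $\nabla\nu\ot Id-Id\ot\nabla\nu$ is the Koszul sign produced when the single homotopy $\ds\nu$ inside $N$ is commuted past the coproduct applied in the two possible tensor positions, and matching these signs together with the desuspension signs hidden in $d_1$, $d_2$, $P$, $I$ and $N$ to recover the overall $\pm$ in the general $\partial_i$ and $\tau_i$ is the delicate part. A secondary point to check carefully is the local-nilpotency claim in each degree, which is what makes the infinite series well defined and which is exactly where the $1$-connectedness hypothesis $C_1=0$ is used.
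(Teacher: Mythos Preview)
The paper does not give its own proof of this statement: the Transfer Theorem is stated with attribution to \cite{Gugenheim} and then used as a black box in the proof of Theorem~\ref{th: formality}. So there is no in-paper argument to compare against.

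That said, your sketch is correct and is precisely the perturbation-theoretic route that underlies Gugenheim's original proof (and its later refinements by Huebschmann--Kadeishvili and others): tensor up the contraction to $(\Omega C,d_1)\rightleftarrows(\Omega H,d_1)$, treat the coalgebra part $d_2$ of the cobar differential as a perturbation, invoke the HPL using the local nilpotency granted by $C_1=0$, and read off $\partial_i$ and $\tau_i$ from the resulting geometric series. Your identification of where $1$-connectedness enters and of why $\tau_i$ ends in $\nabla\nu$ while $\partial_i$ ends in $\nabla i$ is accurate; the sign bookkeeping you flag is indeed the only genuinely fiddly part, and the paper sidesteps it entirely by writing $\pm$ in the general formulas.
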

\begin{proof}[Proof of Theorem \ref{th: formality}]
 The chain complex $C$ is formal as dg-coalgebra.
More precisely, over a field we have a  contraction (cf. \cite{Gugenheim}),
\begin{center}
\begin{tikzpicture} [>=stealth,thick,draw=black!50, arrow/.style={->,shorten >=1pt}, point/.style={coordinate}, pointille/.style={draw=red, top color=white, bottom color=red}]
\matrix[row sep=9mm,column sep=16mm,ampersand replacement=\&]
{
\node (a){$(C,d)$}; \&  \node (b) {$(H_*(C,d),0)$};\\
}; 
\path
(a)     edge[above,arrow,out=30,in=140,looseness=0.8]      node{\scriptsize$p$} (b)
        (b)     edge[above,arrow,out=210, in=320, looseness=0.8]      node{\scriptsize$i$} (a)
	(a)  	   edge[left,arrow,loop above,out=160,in=200,looseness=4]      node[yshift=0.1cm,xshift=-0.1cm]{\scriptsize$\nu$}  (a)
           	; 
\end{tikzpicture}
\end{center}
where, $p$ and $i$ are chain maps and $\nu$ is a chain homotopy such that:
\begin{align}
 pi=Id\\
ip-Id = d\nu + \nu d \\
p\nu=\nu i=\nu^2=0 &~~ \text{(Gauge conditions).}
\end{align}
The transfer theorem for $A_{\infty}$-coalgebra \cite{Gugenheim} transfers the dg-coalgebra structure (seen as an  $A_{\infty}$-coalgebra) from $C$ to $H_*(C,d)$. 
Since the coproduct of $C$ is primitive, the obtained $A_{\infty}$-coalgebra structure  $(H_*(C,d),\delta_i)$ is reduced to the only coproduct 
\begin{equation*}
H(\nabla_{C}):H_*(C,d)\to H_*(C,d)\ot H_*(C,d).
\end{equation*}
Moreover, $H(\nabla_{C})$ is primitive.  
Indeed, the HTT provides a differential (which is also a derivation)
$\delta : \Omega H_*(C,d) \to\Omega H_*(C,d)$. 
This one is given by 
  $\delta=\sum_{i\geq 1}\delta_i$ with $\delta_i(H_*(C,d))\subset (H_*(C,d))^{\ot i+1}$. 
Let us denote $H_*(C,d)$ by $H$.
Explicitly, we have, 
\begin{align*}
 \delta_0&= d_H=0 \\
\delta_1&=  (p\ot p)\nabla_{C} i = \nabla_H \\
\delta_2&=p^{\ot 3}(\nabla_{C} \nu\ot Id - Id \ot \nabla_{C} \nu)\nabla_{C}  i,
\end{align*}
and more generally, for $i\geq 2$,
\begin{multline*}
\delta_i= \sum_{\substack{0\leq k_{i-1}\leq i-1 \\ 0\leq k_{i-2}\leq i-2 \\ \cdots \\ 0\leq k_{1}\leq 1 }}\pm  p^{\ot i+1}(Id^{\ot k_{i-1}}\ot \nabla_{C} \nu\ot Id^{\ot i-1-k_{i-1}})(Id^{\ot k}\ot \nabla_{C} \nu\ot Id^{\ot i-1-k})...\\ ...(\nabla_{C} \nu\ot Id - Id \ot \nabla_{C} \nu)\nabla_{C}  i.
\end{multline*}
In our case, $\nabla_C$ is primitive, and since $\nu$ satisfies the gauge conditions, then only $\delta_1$ is not zero.
Moreover, let us write the twisting cochain $\tau:C\to \Omega H$ provided by the HTT. Its components $\tau=\sum_{i\geq 0}\tau_i$ where $\tau_i:C\to H^{\ot i+1}$ are:
\begin{align*}
 \tau_0&= p \\
\tau_1&=  (p\ot p)\nabla_C \nu \\
\tau_2&=p^{\ot 3}(\nabla_C\nu\ot Id - Id \ot \nabla_C\nu)\nabla_C \nu
\end{align*}
and more generally, for $i\geq 2$,
\begin{multline*}
\tau_i= \sum_{\substack{0\leq k_{i-1}\leq i-1 \\ 0\leq k_{i-2}\leq i-2 \\ \cdots \\ 0\leq k_{1}\leq 1 }}\pm p^{\ot i+1}(Id^{\ot k_{i-1}}\ot \nabla_C\nu\ot Id^{\ot i-1-k_{i-1}})(Id^{\ot k}\ot \nabla_C\nu\ot Id^{\ot i-1-k})...\\ ...(\nabla_C\nu\ot Id - Id \ot \nabla_C\nu)\nabla_C \nu 
\end{multline*}
Thus it is reduced to
\begin{equation*}
\tau_0=p:C\to H. 
\end{equation*}
Thus, $p$ is an $A_{\infty}$-coalgebra morphism, i.e. $\Omega p$ is dg-algebra morphism. Its components $(\Omega p)_n$ on $(\ds C)^{\ot n}$ are given by:
\begin{align*}
  (\Omega p)_n = \ds p\s \ot \ds p\s \ot \cdots \ot \ds p\s .
\end{align*}

We obtain the contraction 
\begin{center}
\begin{tikzpicture} [>=stealth,thick,draw=black!50, arrow/.style={->,shorten >=1pt}, point/.style={coordinate}, pointille/.style={draw=red, top color=white, bottom color=red}]
\matrix[row sep=9mm,column sep=16mm,ampersand replacement=\&]
{
\node (a){$(\Omega C,d)$}; \&  \node (b) {$(\Omega H,0)$};\\
}; 
\path
(a)     edge[above,arrow,out=30,in=140,looseness=0.8]      node{\scriptsize$\Omega p$} (b)
        (b)     edge[above,arrow,out=210, in=320, looseness=0.8]      node{\scriptsize$\Omega i$} (a)
	(a)  	   edge[left,arrow,loop above,out=160,in=200,looseness=4]      node[yshift=0.1cm,xshift=-0.1cm]{\scriptsize$\Gamma$}  (a)
           	; 
\end{tikzpicture}
\end{center}
where, 
\begin{align}
\Omega p \Omega i=Id\\
\Omega i \Omega p-Id = d\Gamma + \Gamma d \\
\Omega p \Gamma =\Gamma \Omega i=\Gamma^2=0 &~~ \text{(Gauge conditions),}
\end{align}
and where  $\Gamma$ is defined by its restrictions $\Gamma_n$ to $(\ds C)^{\ot n}$:
\begin{multline*}
 (\Gamma)_n = \sum_{\substack{k_1+\cdots + k_{s+1}=n-s\\ 1\leq s\leq n}} \pm Id^{\ot k_1} \ot (\ds \nu\s) \ot Id^{\ot k_2} \ot (\ds \ov{\partial}\nu\s) \ot Id^{\ot k_3} \ot (\ds \ov{\partial}\nu\s) \ot Id^{\ot k_4} \\
\ot \cdots \ot  (\ds \ov{\partial} \nu\s) \ot Id^{\ot k_{s+1}},
\end{multline*}
where $\ov{\partial}\nu:=d\nu+\nu d$.
Equivalently, setting $\Gamma_1:=\ds\nu\s$, we have: 
\begin{align*}
 \Gamma_n=Id\ot \Gamma_{n-1} \pm \ds\nu\s\ot \ov{\partial}\Gamma_{n-1} \pm \ds \nu\s\ot Id^{\ot n-1},~n\geq 2.
\end{align*} 
The homotopy G-coalgebra structure of  $C$ endowed $\Omega C$ with a dg-coalgebra structure; we set $(\Omega C,d,\nabla_0)$ the obtained dg-coalgebra. 
This dg-coalgebra structure on $\Omega C$, seen as $A_{\infty}$-coalgebra structure, transfers to an $A_{\infty}$-coalgebra structure on $\Omega H$, which is  $A_{\infty}$-equivalent to the  dg-coalgebra $(\Omega C,d,\nabla_0)$. 
Let us show that the obtained $A_{\infty}$-coalgebra structure  $(\Omega H_*(\Sigma^2 X),\partial_i)$ is reduced to the coproduct $H_*(\nabla_0)$.
The higher coproducts 
\[
\partial_i :  \Omega H \to (\Omega H)^{\ot i+1},
\]
are given by:
\begin{align*}
 \partial_0&= 0 \\
\partial_1&=  (\Omega p\ot \Omega p)\nabla_0 \Omega i =H_*(\nabla_0) \\
\partial_2&=(\Omega p)^{\ot 3}(\nabla_0 \Gamma\ot Id - Id \ot \nabla_0\Gamma )\nabla_0 \Omega i
\end{align*}
and more generally, 
for $i\geq 2$,
\begin{multline*}
\partial_i= \sum_{\substack{0\leq k_{i-1}\leq i-1 \\ 0\leq k_{i-2}\leq i-2 \\ \cdots \\ 0\leq k_{1}\leq 1 }}\pm (\Omega p)^{\ot i+1}(Id^{\ot k_{i-1}}\ot \nabla_0\Gamma\ot Id^{\ot i-1-k_{i-1}})(Id^{\ot k}\ot \nabla_0\Gamma\ot Id^{\ot i-1-k})...\\ ...(\nabla_0\Gamma \ot Id - Id \ot \nabla_0\Gamma)\nabla_0 \Omega i .
\end{multline*}
Let us remark that: each term of $\Gamma$ has at least one operation $\nu$; the coproduct $\nabla_0$, being a shuffle-coproduct, is the composition of  deconcatenations followed by permutations.
Thus, each term of $\nabla_0\Gamma$ has the operation $\nu$.
Then, because of the gauge condition $p\nu=0$, the post-composition by $\Omega p$ vanishes each term of $\nabla_0\Gamma$.\\
Therefore only the co-operation $H_*(\nabla_0)$ is non trivial.

The same argument shows that the $A_{\infty}$-quasi-isomorphism between the dg-coalgebras 
$(\Omega C,d,\nabla_0)$ and $(\Omega H,0,H_*(\nabla_0))$ is strict.
Indeed, the twisting cochain 
\[
\tau:\Omega C\to \Omega^2 H, 
\]
with components $\tau_i:\Omega C\to (\Omega H)^{\ot i+1}$ is:
\begin{align*}
 \tau_0&= \Omega p \\
\tau_1&=  (\Omega p\ot\Omega p)\nabla_0 \Gamma \\
\tau_2&=(\Omega p)^{\ot 3}(\nabla_0\Gamma\ot Id - Id \ot \nabla_0\Gamma)\nabla_0 \Gamma,
\end{align*}
and more generally, for $i\geq 2$,
\begin{multline*}
\tau_i= \sum_{\substack{0\leq k_{i-1}\leq i-1 \\ 0\leq k_{i-2}\leq i-2 \\ \cdots \\ 0\leq k_{1}\leq 1 }}\pm(\Omega p)^{\ot i+1}(Id^{\ot k_{i-1}}\ot \nabla_0\Gamma\ot Id^{\ot i-1-k_{i-1}})(Id^{\ot k}\ot \nabla_0\Gamma\ot Id^{\ot i-1-k})...\\ ...(\nabla_0\Gamma\ot Id - Id \ot \nabla_0\Gamma)\nabla_0 \Gamma. 
\end{multline*}
As a consequence, the double cobar construction $\Omega^2 C$ is quasi-isomorphic to $\Omega^2 H$, as homotopy BV-algebras.
\end{proof}

\section{The 2-restricted G-algebras}\label{sec: G-alg}
In this section the field of coefficients is $\mathbb{F}_2$.
\begin{de}\label{de: lie restreinte}
 Let $r\in \mathbb{N}$. A $2$-restricted Lie algebra of degree $r$ is a connected graded vector space $L$ endowed with a Lie bracket of degree $r$,
\[
 [-;-]_r : L_j\ot L_k \to L_{j+k+r},~~j,k\geq 0,
\]
and a restriction,
\[
 \xi_r:L_n\to L_{2n+r},
\]
satisfying:
\begin{enumerate}
 \item $[-;-]_r$ is symmetric;
 \item $\xi_r(kx)=k^2\xi_r(x)$;
\item $[x;[y;z]_1]_r+[y;[z;x]_r]_r+[z;[x;y]_r]_r=0$;
 \item $[\xi(x);y]_r=[x;[x;y]_r]_r$\label{propt: 4} ; and,
\item $\xi_r(x+y)=\xi_r(x)+[x;y]_1+\xi_r(y)$,\label{propt: 5} 
\end{enumerate}
for all homogeneous elements $x,y,z \in L$.
\end{de}
\begin{de}\label{de: Gerst restreinte}
A $2$-restricted Gerstenhaber algebra  is a degree one $2$-restricted Lie algebra, $G$, and a graded symmetric algebra 
 such that:
\begin{enumerate}
\item $[x,y z]_1 = [x,y]_1z + y[x,z]_1$;
 \item $\xi_1(xy)=x^2\xi_1(y)+ \xi_1(x)y^2 +x[x;y]_1y$,\label{propt: 2}
\end{enumerate} for all homogeneous elements $x,y,z \in G$.
\end{de}

\begin{prop}\label{prop: homologie de G alge est Gerst}
 The homology $H_*(\La,d)$ of a homotopy  G-algebra $(\La,d,\cdot,\cup_1,\{E_{1,k}\}_{k\geq 2})$ over $\mathbb{F}_2$ is a $2$-restricted Gerstenhaber algebra. The restriction is induced by the map $x\mapsto x\cup_1 x$.
\end{prop}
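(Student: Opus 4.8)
The plan is to work almost entirely at the chain level with the four characteristic-two identities \eqref{diff11}, \eqref{assoE}, \eqref{der11}, \eqref{der21}, using repeatedly that over $\mathbb{F}_2$ the Leibniz rule reads $d(uv)=(du)v+u(dv)$ with no signs. The first step is to check that the structure maps descend to $H_*(\La,d)$. By \eqref{diff11}, $ab+ba=\ov{\partial}(\cup_1)(a\ot b)=d(a\cup_1 b)$ whenever $a,b$ are cocycles, so the product induces a graded symmetric (i.e. commutative) algebra structure on homology. For the bracket $\{a;b\}:=a\cup_1 b+b\cup_1 a$: when $a,b$ are cocycles \eqref{diff11} gives $d(a\cup_1 b)=ab+ba$, hence $d\{a;b\}=(ab+ba)+(ba+ab)=0$; and when $a=dy$ with $b$ a cocycle the same relation yields $\{dy;b\}=d(y\cup_1 b+b\cup_1 y)$. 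Thus $\{-;-\}$ induces a symmetric degree one operation $[-;-]_1$ on $H_*(\La,d)$. For the restriction: if $x$ is a cocycle then $d(x\cup_1 x)=xx+xx=0$ by \eqref{diff11}; and replacing $x$ by $x+dy$ gives, by bilinearity of $\cup_1$, $(x+dy)\cup_1(x+dy)=x\cup_1 x+\{x;dy\}+(dy)\cup_1(dy)$, where $\{x;dy\}$ is a coboundary by the previous point and $(dy)\cup_1(dy)=d\bigl(y\cup_1(dy)+yy\bigr)$ by \eqref{diff11} and the Leibniz rule. Hence $\xi_1[x]:=[x\cup_1 x]$ is well defined (it is quadratic, not linear, which is why this last check is needed).

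Next I would verify the axioms of Definitions~\ref{de: lie restreinte} and~\ref{de: Gerst restreinte}. Symmetry of $\{-;-\}$ and $\xi_1(\lambda x)=\lambda^2\xi_1(x)$ are immediate over $\mathbb{F}_2$, and $\xi_1(x+y)=\xi_1(x)+\{x;y\}+\xi_1(y)$ holds at chain level since $(x+y)\cup_1(x+y)=x\cup_1 x+\{x;y\}+y\cup_1 y$. For the Jacobi identity and for property~\ref{propt: 4} (that is, $\{x\cup_1 x;y\}=\{x;\{x;y\}\}$) the key input is \eqref{assoE}: it says the associator $\mathrm{as}(a,b,c):=(a\cup_1 b)\cup_1 c+a\cup_1(b\cup_1 c)$ equals $E_{1,2}(a;b,c)+E_{1,2}(a;c,b)$, hence is symmetric in its last two entries, and in particular $\mathrm{as}(a,b,b)=2E_{1,2}(a;b,b)=0$. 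Expanding $\{\{a;b\};c\}+\{\{b;c\};a\}+\{\{c;a\};b\}$, respectively $\{x\cup_1 x;y\}-\{x;\{x;y\}\}$, into right-normed products $u\cup_1(v\cup_1 w)$ plus associator terms, one finds that every right-normed product occurs an even number of times (hence vanishes mod $2$) and that the remaining associator terms cancel by the last-two symmetry together with characteristic two. So both identities hold already at chain level; this is the characteristic-two, degree-shifted incarnation of the fact that a right-symmetric algebra is Lie-admissible, and no coboundary correction is needed.

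Finally, for the two Gerstenhaber compatibilities of Definition~\ref{de: Gerst restreinte}: the Leibniz rule $[x;yz]_1=[x;y]_1z+y[x;z]_1$ follows by adding \eqref{der11} (which gives $(yz)\cup_1 x=y(z\cup_1 x)+(y\cup_1 x)z$ exactly) to \eqref{der21} (which gives $x\cup_1(yz)=(x\cup_1 y)z+y(x\cup_1 z)$ modulo $\ov{\partial}E_{1,2}(x\ot y\ot z)$); for cocycles $x,y,z$ this last term is $d\bigl(E_{1,2}(x\ot y\ot z)\bigr)$, a coboundary, so on homology $\{x;yz\}=\{x;y\}z+y\{x;z\}$. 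For property~\ref{propt: 2}, I expand $(xy)\cup_1(xy)$ first by \eqref{der11} with $a=x$, $b=y$, $c=xy$, and then expand $x\cup_1(xy)$ and $y\cup_1(xy)$ by \eqref{der21}; collecting terms and using associativity of the product yields $x^2(y\cup_1 y)+(x\cup_1 x)y^2+x\bigl((x\cup_1 y)+(y\cup_1 x)\bigr)y$ plus the terms $x\cdot\ov{\partial}E_{1,2}(y\ot x\ot y)$ and $\ov{\partial}E_{1,2}(x\ot x\ot y)\cdot y$, both of which are coboundaries when $x,y$ are cocycles. Passing to homology gives $\xi_1(xy)=x^2\xi_1(y)+\xi_1(x)y^2+x[x;y]_1y$, as required. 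The only real bookkeeping obstacle throughout is sign management; but in characteristic two every Koszul sign disappears, and this is exactly what makes the cyclic cancellation in the Jacobi identity (and the analogous cancellations above) come out on the nose. The single genuinely structural ingredient is relation \eqref{assoE} and the resulting symmetry of the associator in its last two arguments.
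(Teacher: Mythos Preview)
Your proof is correct and follows essentially the same approach as the paper: both use \eqref{assoE} to establish property~\ref{propt: 4} at chain level, verify property~\ref{propt: 5} by direct expansion, use \eqref{der11} and \eqref{der21} to obtain property~\ref{propt: 2} of Definition~\ref{de: Gerst restreinte} up to a coboundary, and check that $x\mapsto x\cup_1 x$ descends to homology via \eqref{diff11}. The only difference is organizational: the paper outsources the Jacobi identity and the Poisson relation to \cite{Kadeishvili}, whereas you prove them directly from the last-two-argument symmetry of the associator and from \eqref{der11}+\eqref{der21}; this makes your argument more self-contained but is not a different method.
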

\begin{rmq}
 The restriction $\xi_1$ is a part of Dyer-Lashof operations $\xi_1,\zeta_1$ \cite{FredCohen} in prime characteristic.
 In \cite{Tourtchine} V. Turchin gives explicit formulas for the operations in homotopy G-algebras inducing the Dyer-Lashof operations in homology. He showed via a different method that, in prime characteristic $p$, a homotopy G-algebra $\La$ is, in particular, a $p$-restricted Lie algebra of degree 1.  
\end{rmq}

\begin{proof}
The bracket $[x;y]_1=x\cup_1 y+y\cup_1 x$ together with the multiplication of $\La$ define a Gerstenhaber algebra structure on $H_*(\La)$, \cite{Kadeishvili}.
We set, 
\[
\xi_1(x):=x\cup_1 x, 
\]
for all $x\in \La$. 
To alleviate notation, let us remove temporarily the index $1$ and write $\xi_1$ as $\xi$.  
It suffices to check that $\xi$ induces a homological map satisfying the properties of a restriction.
First, we show the properties \ref{propt: 4} and \ref{propt: 5} from Definition \ref{de: lie restreinte} are satisfied on $\La$, next,
we show that the property \ref{propt: 2} from Definition \ref{de: Gerst restreinte} is satisfied up to homotopy.

We show the equality $[\xi(x);y]_1=[x;[x;y]_1]_1$. 
We have
\begin{equation}\label{eq: restr jacobi}
 \begin{split}
  [x;[x;y]_1]_1&=x\cup_1(x\cup_1 y +y\cup_1 x)+(x\cup_{1}y + y\cup_{1}x) \cup_{1} x\\
&=x\cup_{1} (x\cup_{1} y) + x\cup_{1} (y\cup_{1}x)+ (x\cup_{1} y)\cup_{1}x + (y\cup_{1} x) \cup_{1}x.
 \end{split}
\end{equation}
From the equation \eqref{assoE}, the two first terms satisfy:
\begin{align*}
x\cup_{1}(x\cup_{1}y) &= (x\cup_{1}x)\cup_{1}y + E_{1,2}(x;x,y)+E_{1,2}(x;y,x)~;\\
x\cup_{1} (y\cup_{1}x) &= (x\cup_{1}y) \cup_{1}x  +E_{1,2}(x;y,x)+E_{1,2}(x;x,y).
\end{align*}
Then 
\begin{align*}
x\cup_{1}(x\cup_{1}y)+ x\cup_{1} (y\cup_{1}x) = (x\cup_{1} x)\cup_{1}y + (x\cup_{1}y)\cup_{1}x.
\end{align*}
Similarly, the last term from \eqref{eq: restr jacobi} reads:
\[
(y\cup_{1} x)\cup_{1}x =y\cup_{1}(x\cup_{1}x) +E_{1,2}(y;x,x)+E_{1,2}(y;x,x)=y\cup_{1} (x\cup_{1}x).
\]
Thus,
\begin{align*}
   [x;[x;y]_1]_1=(x\cup_{1} x)\cup_{1}y + y\cup_{1} (x\cup_{1}x) =\xi(x)\cup_{1} y+ y \cup_{1}\xi(x)= [\xi(x);y]_1.
\end{align*}
The following  equality  $\xi(x+y)=\xi(x)+[x;y]_1+\xi(y)$ is straightforward:
\begin{equation}
\begin{split}\label{eq: xi lineaire}
 \xi(x+y)&=(x+y)\cup_{1}(x+y)\\
&= x\cup_{1} x+ y\cup_{1}x +  x\cup_{1}y + y\cup_{1}y\\
&= \xi(x)+[x;y]_1+\xi(y). 
\end{split}
\end{equation}
The equality $\xi(xy)=x^2\xi(y) + \xi(x)y^2 + x[x;y]_1y$ is satisfied up to homotopy on the chain complex $\La$.
This is a consequence of the equations \eqref{der11} and \eqref{der21}. Denoting by $\sim$ the  homotopy relation,
we can write,
\begin{align*}
 \xi(xy)&= xy\cup_{1}xy\\
&= x(y\cup_{1} xy)+ (x\cup_{1} xy)y \\
&\sim x^2(y\cup_{1}y)+x(y\cup_{1}x)y+x(x\cup_{1}y)y +(x\cup_{1}x)y^2\\
&\sim x^2\xi(y)+x[x;y]_1y+\xi(x)y^2 
\end{align*}

Now we show that $\xi$ induces a restriction on the homology $H_*(\La)$.
\\
From the equation \eqref{diff11} the operation $\cup_{1}$ is a homotopy for the commutativity of the product.  
In particular,
\[
 d(x\cup_{1}x)+ dx\cup_{1}x+ x\cup_{1}dx= xx+xx=0, ~~\text{for all}~~ x\in \La.
\]
Thus, if $x\in A$ is a cycle, 
\[
 d\xi(x)=d(x\cup_{1} x)=dx\cup_{1}x+ x \cup_{1} dx=0+0=0.
\]
If $y=dx\in A$ is a boundary, then 
\[
 \xi(y)=y\cup_{1}y=dx\cup_{1} dx=d(dx\cup_{1} x)+d^2x\cup_{1}x + dx\cdot x + xdx = d(dx\cup_{1} x)+d(x^2),
\]
is a boundary.
Thus, for a cycle $x\in \La $ and an element $y \in \La$, we obtain from \eqref{eq: xi lineaire}:
\begin{align*}
 \xi(x+dy)&= \xi(x)+ [x;dy]_1+ \xi(dy) \\
 &= \xi(x)+ d[x;y]_1+ [dx;y]_1+ d(dy\cup_{1} y)\\
&= \xi(x)+ d[x;y]_1+ d(dy\cup_{1}y)+d(y²).
\end{align*}
Therefore $\xi$ induces a restriction in homology.
\end{proof}

\begin{rmq}
 The above proposition also results from the following fact.
 Recall that for a homotopy G-algebra $\La$ the bar construction $\Ba\La$ is a dg-bialgebra. 
 Then, by composition with the restriction morphism $\ca{L}_p\to \ca{AS}$, the bar construction $\Ba \La$ is a $p$-restricted Lie algebra (of degree $0$). 
The projections on $\La$ of both the bracket and the restriction of $\Ba \La$ correspond respectively to the bracket and the  restriction  described in \cite{Tourtchine}.
The latter operations make $\La$ into a $p$-restricted Lie algebra of degree $1$.
In other words, the Lie bracket $[-;-]_0$ and the restriction $\xi_0$ on $\Ba \La$ give, after projection, the degree one Lie bracket $[-;-]_1$ and the restriction $\xi_1$ on $\La$, respectively.
\end{rmq}
\begin{de}
A morphism of $2$-restricted Gerstenhaber algebras  is a degree $0$ linear map  compatible with the  product, the bracket and the restriction.
\end{de}

\begin{de}
 An $\infty$-morphism of homotopy G-algebras $\La$ and $\La'$ is a  morphism of unital dg-bialgebras $f:\Ba\La \to \Ba \La'$. 
\end{de}
Such a morphism is a collection of maps $f_n: \La^{\ot n} \to \La'$, $n\geq1$ satisfying some relations, see \cite[Appendix Definition 4.17]{Quesney} for details. 
In particular, 
\begin{align}
\ov{\partial}f_1&=0;\\
 f_1(x)\cup_1 f_1(y) + f_1(x\cup_1 y)&= f_2(x;y)+ f_2(y;x) ~~\text{for all}~x,y \in \La. \label{eq: morph restr Gerst}
\end{align}

An $\infty$-morphism of homotopy G-algebras induces a morphism of Gerstenhaber algebras in homology, see \cite[Appendix, Proposition 4.20]{Quesney}.
A direct check shows the compatibility of the $\infty$-morphisms with the restriction:
\begin{prop}
 An $\infty$-morphism of homotopy G-algebras induces a morphism of $2$-restricted Gerstenhaber algebras in homology.
\end{prop}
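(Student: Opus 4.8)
The plan is to leverage the fact, already recorded just above, that an $\infty$-morphism of homotopy G-algebras $f\colon\Ba\La\to\Ba\La'$ induces in homology a morphism of Gerstenhaber algebras, i.e. that the induced map $[f_1]\colon H_*(\La)\to H_*(\La')$ is a degree-$0$ linear map compatible with the graded symmetric products and with the degree-$1$ brackets $[-;-]_1$. So the only thing I have to add is compatibility of $[f_1]$ with the restriction $\xi_1$. First I would recall that $f_1$ is a chain map, since $\ov\partial f_1=0$, so it descends to a well-defined map on homology; and that, by Proposition~\ref{prop: homologie de G alge est Gerst}, the restriction on $H_*(\La)$ is represented on cycles by $x\mapsto x\cup_1 x$.

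The key step is then to specialise the structural identity \eqref{eq: morph restr Gerst} to $y=x$, which gives
\begin{align*}
 f_1(x)\cup_1 f_1(x)+f_1(x\cup_1 x)=f_2(x;x)+f_2(x;x).
\end{align*}
Over $\mathbb{F}_2$ the right-hand side vanishes, so $\xi_1\bigl(f_1(x)\bigr)=f_1(x)\cup_1 f_1(x)=f_1(x\cup_1 x)=f_1\bigl(\xi_1(x)\bigr)$ already at the chain level. In particular this holds for cycles and so passes to homology, showing $[f_1]\circ\xi_1=\xi_1\circ[f_1]$. Combining this with the Gerstenhaber-algebra compatibilities recalled above yields that $[f_1]$ is a morphism of $2$-restricted Gerstenhaber algebras.

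I expect essentially no genuine obstacle here — this is the ``direct check'' alluded to in the statement — and the only point that deserves a word of care is that \eqref{eq: morph restr Gerst} be used in the precise form produced by the $\infty$-morphism relations. If one prefers to keep track of the terms $f_2\circ d$ and $d\circ f_2$, then for a cycle $x$ the would-be correction $(\ov\partial f_2)(x;x)$ reduces to the boundary $d\,f_2(x;x)$, so the conclusion in homology is the same.
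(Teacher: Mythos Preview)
Your proof is correct and follows exactly the paper's approach: specialise \eqref{eq: morph restr Gerst} to $y=x$, observe the right-hand side vanishes over $\mathbb{F}_2$, and conclude $f_1(\xi_1(x))=\xi_1(f_1(x))$. The paper's own proof is precisely this one-line computation, so your additional remarks about passage to homology and the $\ov\partial f_2$ correction are sound but more than what is needed.
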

\begin{proof} 
Take $y=x$ in the equation \eqref{eq: morph restr Gerst}, this gives:
\[
 f_1(\xi_1(x))=\xi_1(f_1(x)).
\]
\end{proof}

Now we define the free $2$-restricted  Gerstenhaber algebra. 
\\

Let $L_{1r}$ be a $2$-restricted Lie algebra of degree $1$.
Then the free graded commutative algebra on $L_{1r}$, $S(L_{1r})$ is a $2$-restricted Gerstenhaber algebra.
Indeed, the  Poisson identity
\[
 [xy; z]_1 = x [y;z]_1 + [x;z]_1 y,
\]
is equivalent to the fact that  
\begin{align*}
 ad_1(x):=[-;x]_1: S(L_{1r})&\to S(L_{1r}) \\
       y&\mapsto [y;x]_1 
\end{align*}
is a derivation of degree $+1$ for the algebra $(S(L_{1r}),\cdot)$.
Let us consider a basis of $L_{1r}$ as $k$-vector space. 
It is sufficient to extend the application $ad_1(l):L_{1r}\to L_{1r}\subset S(L_{1r})$ as a derivation on $S(L_{1r})$ for all $l\in L_{1r}$, and next to extend $[x;-]_1:L_{1r}\to S(L_{1r})$ for all $x\in S(L_{1r})$.
\\
The restriction   is extended  via the property \ref{propt: 2} from Definition \ref{de: Gerst restreinte}.
\\

Let $V$ be a dg-vector space with $V_0=0$. If $L_{1r}(V)$ denotes the free degree one $2$-restricted Lie algebra on $V$, then $S(L_{1r}(V))$ is the free $2$-restricted Gerstenhaber algebra on 
$V$.

\section{The homology of the double cobar construction of a double suspension over \texorpdfstring{$\mathbb{F}_2$}{Lg}}
\begin{theo}\label{th: GerstLibreCobar1restreint}
Let the coefficient field be $\mathbb{F}_2$. 
Then $\Omega^2 C_*(\Sigma^2X)$ has a Connes-Moscovici structure. 
The induced  $2$-restricted Gerstenhaber algebra structure on $H_*(\Omega^2 C_*(\Sigma^2 X))$ is free on the reduced homology  $H^+_*(X)$.
\end{theo}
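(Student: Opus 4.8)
The plan is to use Proposition~\ref{prop: reduced G-cog} together with the formality Theorem~\ref{th: formality} to replace $\Omega^2 C_*(\Sigma^2 X)$ by the very explicit complex $\Omega^2 H_*(\Sigma^2 X)$, and then to identify the homology of the latter with the free $2$-restricted Gerstenhaber algebra by a direct computation.

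First I would set up the reduction. By Proposition~\ref{prop: reduced G-cog}, $C_*(\Sigma^2 X)$ is a reduced (hence involutive) homotopy G-coalgebra whose coproduct is primitive, and $\Omega^2 C_*(\Sigma^2 X)$ carries the Connes--Moscovici structure; this is exactly the hypothesis of Theorem~\ref{th: formality}. Hence $\Omega^2 C_*(\Sigma^2 X)$ and $\Omega^2 H_*(\Sigma^2 X)$ are quasi-isomorphic as homotopy BV-algebras, so in particular there is an $\infty$-quasi-isomorphism of homotopy G-algebras between them. By the last Proposition of Section~\ref{sec: G-alg}, together with Proposition~\ref{prop: homologie de G alge est Gerst} (which supplies the $2$-restricted Gerstenhaber structures, with $\xi_1$ induced by $x\mapsto x\cup_1 x$), this yields an isomorphism of $2$-restricted Gerstenhaber algebras
\[
H_*\bigl(\Omega^2 C_*(\Sigma^2 X)\bigr)\;\cong\;H_*\bigl(\Omega^2 H_*(\Sigma^2 X)\bigr).
\]
It then suffices to prove that $H_*(\Omega^2 H_*(\Sigma^2 X))$ is the free $2$-restricted Gerstenhaber algebra on $H^+_*(X)$.

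Second I would make $\Omega^2 H_*(\Sigma^2 X)$ fully explicit and construct a comparison map. Being a double suspension, $H_*(\Sigma^2 X)$ has zero differential and is the trivial coalgebra on $H^+_*(\Sigma^2 X)=s^2 H^+_*(X)$ (every element primitive), so $\Omega H_*(\Sigma^2 X)=T\bigl(s^{-1}s^2 H^+_*(X)\bigr)$ carries the zero differential and the coproduct for which the generators are primitive; consequently $\Omega^2 H_*(\Sigma^2 X)=T\bigl(s^{-1}\,\overline{T(s^{-1}s^2 H^+_*(X))}\bigr)$, the cobar differential being the inner deshuffling of words. The double desuspension $v\mapsto s^{-1}(s^{-1}v)$ gives a degree-preserving inclusion of $H^+_*(X)$ into $\Omega^2 H_*(\Sigma^2 X)$ whose image consists of cycles, since both the differential and the reduced coproduct of $\Omega H_*(\Sigma^2 X)$ vanish on a one-letter word. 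Using the freeness of $S(L_{1r}(-))$ recalled at the end of Section~\ref{sec: G-alg}, this inclusion extends uniquely to a morphism of $2$-restricted Gerstenhaber algebras $\phi\colon S\bigl(L_{1r}(H^+_*(X))\bigr)\to H_*(\Omega^2 H_*(\Sigma^2 X))$.

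Finally I would show that $\phi$ is an isomorphism, which I expect to be the main obstacle. The key point is that the cobar differential preserves the total number $N$ of letters from $s^{-1}s^2 H^+_*(X)$ occurring in an element, so $\Omega^2 H_*(\Sigma^2 X)$ splits as the direct sum over $N\ge 0$ of its weight-$N$ subcomplexes, and this grading matches the weight grading of $S(L_{1r}(H^+_*(X)))$ (generators in weight one, the product and the bracket adding weights, the restriction $\xi_1$ doubling them). In weight one the subcomplex is $H^+_*(X)$ with zero differential; in weight two a short computation over $\mathbb F_2$ identifies its homology with $S^2(H^+_*(X))$ together with the classes $\{v_i;v_j\}_1$ $(i<j)$ and $\xi_1(v_i)$ --- the boundaries realizing graded commutativity of the product --- which is exactly the weight-two part of $S(L_{1r}(H^+_*(X)))$. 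The general step is to compute the homology of the weight-$N$ subcomplex, with the operations identified through the explicit $\cup_1$ on the double cobar, and match it with the weight-$N$ part of the free $2$-restricted Gerstenhaber algebra, ruling out hidden relations and extensions; this is where essentially all the work lies, and it amounts to a re-derivation, through the double cobar model, of F.\,Cohen's computation of $H_*(\Omega^2\Sigma^2 X;\mathbb F_2)$ \cite{FredCohen} (it is precisely over $\mathbb F_2$ that the slicker char-zero argument, which identifies $\Omega^2 H_*(\Sigma^2 X)$ with a cobar of a cofree cocommutative coalgebra, fails --- the operation $\xi_1$ measures that failure). Once $\phi$ is known to be an isomorphism, combining it with the Baues isomorphism $H_*(\Omega^2 C_*(\Sigma^2 X))\cong H_*(\Omega^2|\Sigma^2 X|)$ of Proposition~\ref{prop: reduced G-cog} also yields the ``therefore'' statement of the introduction.
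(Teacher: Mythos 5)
Your first two steps coincide with the paper's proof: reduce via Proposition \ref{prop: reduced G-cog} and the formality Theorem \ref{th: formality} to $\Omega^2 H_*(\Sigma^2 X)$ (using that an $\infty$-morphism of homotopy G-algebras induces a morphism of $2$-restricted Gerstenhaber algebras in homology), and then use the freeness of $S$ and $L_{1r}$ to extend the inclusion $\iota\colon H^+_*(X)\hookrightarrow H_*(\Omega^2 H_*(\Sigma^2X))$ to a morphism $SL_{1r}(\iota)$ of $2$-restricted Gerstenhaber algebras. Up to that point your argument is correct and is exactly the paper's.

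The genuine gap is in your final step, which is where the entire difficulty sits. You observe that the cobar differential preserves the number of letters from $H^+_*(X)$, check weights one and two, and then state that ``the general step is to compute the homology of the weight-$N$ subcomplex \ldots ruling out hidden relations and extensions,'' conceding that this amounts to re-deriving F.\ Cohen's computation through the double cobar model. That is not a proof strategy but a restatement of the statement to be proved: for general $N$ you give no mechanism to show either injectivity (absence of hidden relations among products, brackets and restrictions) or surjectivity of $SL_{1r}(\iota)$ in that weight, and over $\mathbb{F}_2$ this is precisely the hard content (weight three already involves the Jacobi/Arnold-type relations and the interaction of $\xi_1$ with the bracket). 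The paper circumvents the direct computation by a comparison of spectral sequences: an acyclic Adams--Hilton construction on $\mathcal{E}=\Omega H_*(\Sigma^2X)\otimes\Omega^2 H_*(\Sigma^2X)$ filtered by the degree in the first factor, whose $E^2$-page is $H_*(\Omega H_*(\Sigma^2X))\otimes H_*(\Omega^2 H_*(\Sigma^2X))$, is compared with Cohen's algebraic model $E^2=TH^+_*(\Sigma X)\otimes SL_{1r}H^+_*(X)$ whose differentials are forced by the transgressions $t(\Sigma x)=x$, $t(\xi_0^k(\Sigma x))=\xi_1^k(x)$, $t(ad_0(\Sigma x_1)\cdots(\Sigma x_k))=ad_1(x_1)\cdots(x_k)$; the model is shown to be acyclic by decomposing it, via Hilton's basis of $[-;-]_0$-elementary products together with their $\xi_0$-powers, into elementary pieces of the form $\wedge(y)\otimes P(t(y))$, and Mac Lane's comparison theorem \cite{MacLaneBookHomology} (applied to $\Phi^2=T(\iota)\otimes SL_{1r}(\iota)$, after checking compatibility with the differentials using primitivity of the coproduct and the compatibility of $\cup_1$, the bracket and the restriction with suspension) then yields that $SL_{1r}(\iota)$ is an isomorphism. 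Some device of this kind is needed; without it your proposal stops exactly where the proof has to begin.
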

It was proved in \cite{FredCohen} that $H_*(\Omega^2\Sigma^2 Y)$ is the free $2$-restricted Gerstenhaber algebra on $H_*^+(Y)$, then we obtain
\begin{cor}
 The restricted Gerstenhaber algebras $H_*(\Omega^2 C_*(\Sigma^2X))$ and  $H_*(\Omega^2|\Sigma^2 X|)$ are isomorphic. 
\end{cor}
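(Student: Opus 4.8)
The plan is to recognize both restricted Gerstenhaber algebras as \emph{free} on the same graded vector space and then invoke the universal property of the free construction. By Theorem \ref{th: GerstLibreCobar1restreint}, the $2$-restricted Gerstenhaber algebra $H_*(\Omega^2 C_*(\Sigma^2 X))$ is free on the reduced homology $H^+_*(X)$. On the topological side, the cited result of F. Cohen \cite{FredCohen} asserts that $H_*(\Omega^2\Sigma^2 Y)$ is the free $2$-restricted Gerstenhaber algebra on $H^+_*(Y)$ for any space $Y$. Applying this with $Y=|X|$ reduces the corollary to comparing the two generating spaces, so no further computation of brackets, products or restrictions is needed beyond what Theorem \ref{th: GerstLibreCobar1restreint} and \cite{FredCohen} already supply.

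First I would identify the topological input. Since the geometric realization functor commutes with suspension, $|\Sigma^2 X|$ is homeomorphic to $\Sigma^2|X|$, so $H_*(\Omega^2|\Sigma^2 X|)\cong H_*(\Omega^2\Sigma^2|X|)$ is free on $H^+_*(|X|)$ by Cohen's theorem. Next, the defining property of the simplicial chain complex gives $H_*(C_*(X))\cong H_*(|X|)$; in particular the reduced parts agree, $H^+_*(X)\cong H^+_*(|X|)$, and this identification preserves the internal grading, so the generators of the two free algebras sit in the same degrees. Thus both $H_*(\Omega^2 C_*(\Sigma^2 X))$ and $H_*(\Omega^2|\Sigma^2 X|)$ are free $2$-restricted Gerstenhaber algebras on graded vector spaces that are isomorphic.

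Finally I would conclude by functoriality. The free $2$-restricted Gerstenhaber algebra $V\mapsto S(L_{1r}(V))$ built in Section \ref{sec: G-alg} is a functor of the generating dg-vector space $V$; hence the isomorphism $H^+_*(X)\xrightarrow{\cong}H^+_*(|X|)$ extends to a unique isomorphism of the associated free algebras, which is the desired isomorphism of restricted Gerstenhaber algebras. The only points requiring care — and where the (mild) obstacle lies — are verifying that $|\Sigma^2 X|\simeq\Sigma^2|X|$ is a genuine double suspension in the sense demanded by \cite{FredCohen}, and checking that the identification $H^+_*(X)\cong H^+_*(|X|)$ is precisely the one matching the two freeness statements, so that the resulting algebra isomorphism is well defined rather than merely abstract. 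All the structural content having been established in Theorem \ref{th: GerstLibreCobar1restreint} and in \cite{FredCohen}, the corollary follows formally.
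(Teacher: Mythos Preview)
Your proposal is correct and follows exactly the paper's approach: the corollary is stated immediately after noting Cohen's result that $H_*(\Omega^2\Sigma^2 Y)$ is free on $H_*^+(Y)$, and the paper gives no further argument beyond this juxtaposition with Theorem~\ref{th: GerstLibreCobar1restreint}. Your added care about $|\Sigma^2 X|\simeq \Sigma^2|X|$ and $H^+_*(X)\cong H^+_*(|X|)$ simply makes explicit what the paper leaves tacit.
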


\begin{proof}[Proof of Theorem \ref{th: GerstLibreCobar1restreint}]
 From Proposition \ref{prop: reduced G-cog} and the formality Theorem  \ref{th: formality} the  homotopy G-algebras $\Omega^2 C_*(\Sigma^2 X)$ and $\Omega^2 H_*(\Sigma^2 X)$ are quasi-isomorphic. Therefore, it remains to prove that the structure of homotopy  G-algebra on the double cobar construction  $\Omega^2 H_*(\Sigma^2 X)$ induces the free $2$-restricted Gerstenhaber algebra on $H^+_*(X)$.

We denote by $[-;-]_1$ the Gerstenhaber bracket on $H_*(\Omega^2 H_*(\Sigma^2 X))$,
\[
 [x;y]_1=x\cup_1 y+ y\cup_1 x,
\]
and we denote by $\xi_1$ the restriction,
\[
 \xi_1(x)=x\cup_1 x,
\]
for all $x\in H_*(\Omega^2 H_*(\Sigma^2 X))$.

We have an inclusion $\iota : H^+_*(X)\cong \s^{-2} {H}^+_*(\Sigma^2 X)\hookrightarrow H_*(\Omega^2 H_*(\Sigma^2X))$.
Thus, from the freeness of the functors $S$ and $L_{1r}$, we have the commutative diagram:
\begin{center}
\begin{tikzpicture} [>=stealth,thick,draw=black!50, arrow/.style={->,shorten >=1pt}, point/.style={coordinate}, pointille/.style={draw=red, top color=white, bottom color=red}]
\matrix[row sep=9mm,column sep=16mm,ampersand replacement=\&]
{
  \node (a2){$SL_{1r}(H^+_*(X))$}; \& \node (b1) {$H_*(\Omega^2 H_*(\Sigma^2X))$} ;\\
    \node (b2){$H^+_*(X)$} ; \\
}; 
\path
	  
   	 (a2)     edge[above,arrow,->]      node {$SL_{1r}(\iota)$}  (b1)
	 (b2)     edge[right,arrow,->]      node {$i$}  (a2)
 	 (b2)     edge[below,arrow,->]      node {$\iota$}  (b1)
        ; 
\end{tikzpicture}
\end{center}
where $SL_{1r}(\iota)$ is a  morphism of $2$-restricted  Gerstenhaber  algebras.  
It remains to show that $SL_{1r}(\iota)$ is an isomorphism.
To do this, we construct two spectral sequences $\ov{E}$ and $E$, and a morphism $\Phi:E\to \ov{E}$ of spectral sequences  satisfying:
\begin{center}
\begin{tikzpicture} [>=stealth,thick,draw=black!50, arrow/.style={->,shorten >=1pt}, point/.style={coordinate}, pointille/.style={draw=red, top color=white, bottom color=red}]
\matrix[row sep=9mm,column sep=26mm,ampersand replacement=\&]
{
   \node (b1) {$T H^+_*(\Sigma X)\ot SL_{1r}H^+_*(X)=E^2$} ; \&   \node (b2){$\ov{E}^2=H_*(\Omega {H}_*(\Sigma^2 X))\ot H_*(\Omega^2H_*(\Sigma^2 X)).$} ; \\
}; 
\path
	  
   	  (b1)     edge[above,arrow,->]      node {$T(\iota)\ot S(L_{1r}(\iota))$}  (b2)
        ; 
\end{tikzpicture}

\end{center}
We conclude by mean of the comparison theorem \cite[Theorem 11.1 p.355]{MacLaneBookHomology}, using that both $T(\iota)$ and $\Phi^{\infty}$ are isomorphisms.\\

Let us consider the application
\begin{align*}
 \pi :\Omega H_*(\Sigma^2 X)\ot\Omega^2H_*(\Sigma^2 X) \to \Omega H_*(\Sigma^2 X)
\end{align*}
defined by
$\pi(x\ot y)=\epsilon(y)x$, where $\epsilon$ is the augmentation of $\Omega H_*(\Sigma^2 X)$.
The total space 
\[
\mathcal{E}:=\Omega H_*(\Sigma^2 X)\ot\Omega^2H_*(\Sigma^2 X) 
\]
is endowed with a differential $d$ and a contracting homotopy $s$ defined as in \cite[Axioms \eqref{R1},\eqref{R2},\eqref{D1},\eqref{D2}]{AdamsHilton}, see below for the details. 
The total space is acyclic \cite[Lemme 2.3]{AdamsHilton}.
We set
\[
 B=\Omega H_*(\Sigma^2 X)
\]
and 
\[
 A=\Omega^2 H_*(\Sigma^2 X)
\]
so that $\mathcal{E}=B\ot A$.
We define a filtration of $\mathcal{E}$ as
\[
F_r\mathcal{E}=  \bigoplus_{p\leq r}B_p\ot A.
\]
Following \cite{AdamsHilton}, we get a spectral sequence $\ov{E}^r$ associated to this filtration. The abutment is the homology of $\mathcal{E}$, the second page  is 
\begin{align*}
\ov{E}^2_{p,q}=H_p(\Omega {H}_*(\Sigma^2 X))\ot H_q(\Omega^2H_*(\Sigma^2 X)).
\end{align*}
The  total space being acyclic we have $\ov{E}^{\infty}_{0,0}=\mathbb{F}_2$, $\ov{E}^{\infty}_{p,q}=0$ if $(p,q)\neq (0,0)$.
\\
The following  spectral sequence of dg-algebras is used by {F. Cohen} \cite[III,p.228]{FredCohen} as a model for the Serre  spectral sequence associated to the paths fibration, 
\[
\Omega^{n+1}\Sigma^{n+1} X\to P\Omega^{n} \Sigma^{n+1} X \to \Omega^n \Sigma^{n+1} X,~~ n>0. 
\]
He shows that the homology $H_*(\Omega^{n+1}\Sigma^{n+1} X)$ is the free $2$-restricted Gerstenhaber algebra of degree $n$  on $H_*^+(X)$.
\\

We defined the spectral sequence  $E^r$. Its second page is given by
\begin{align*}
E^2_{p,q}= (T({H}^+_*(\Sigma X)))_p\ot (S(L_{1r}(H^+_*(X))))_q,
\end{align*}
 the   differentials are defined such that  $E^r$ is a spectral sequence of dg-algebras whose transgressions are
\begin{align*}
t(\Sigma x)&= x \\ 
t(\xi_0^k(\Sigma x))&= \xi^k_1(x)\\
t(ad_0(\Sigma x_1)\cdots ad_0(\Sigma x_{k-1})(\Sigma x_{k}))&=ad_1(x_1)\cdots ad_1( x_{k-1})( x_{k}),%
\end{align*}
for all  $\Sigma x,\Sigma x_1,...,\Sigma x_k \in {H}^+_*(\Sigma X)$,
where:
\begin{itemize}
  \item $\xi_0(x)=x^2$ is the restriction of $TH^+_*(\Sigma X)$;
 \item $\xi_1$ is the restriction of  $SL_{1r}(H^+_*(X))$; 
\item $[-;-]_0$ is the Lie bracket of  $TH^+_*(\Sigma X)$;
 \item $[-;-]_1$ is the Gerstenhaber bracket of $SL_{1r}H^+_*(X)$; and,
 \item $ad_j(x)(y)=[y;x]_j$ for $j=0$ or $1$.\\
\end{itemize}
Then the $r$-th transgression $t=d^r:E^r_{r,0}\to E^r_{0,r-1}$ is extended to 
$ d^r:E^r_{p,q}\to E^r_{p-r,q+r-1}$ as a derivation.

Let us investigate the example of one generator $\Sigma x\in H^+_n(\Sigma X)$ of degree $n$; the spectral sequence is: 
\begin{figure}[H]\label{fig: spect gene 1}\centering
\begin{tikzpicture}[>=stealth,thick,draw=black!50, arrow/.style={->,shorten >=1pt}, point/.style={coordinate}, pointille/.style={draw=red, top color=white, bottom color=red},scale=0.8]

\draw[->] (0,0) -- (0,7);
\draw[->] (0,0) -- (10,0);

\pgfpathcircle{\pgfpoint{0cm}{5cm}}{2pt}
\pgfpathcircle{\pgfpoint{6cm}{5cm}}{2pt}
 \pgfusepath{fill}

\foreach \t in {2,3}
{\pgftext[at=
\pgfpointlineattime{\t}{\pgfpoint{0.3cm}{+0.3cm}}{\pgfpoint{3.3cm}{+0.3cm}}]{$\t n$}
}
\foreach \t in {0,3,...,9}
{
\pgftext[at=
\pgfpointlineattime{\t}{\pgfpoint{0cm}{0cm}}{\pgfpoint{1cm}{0cm}}]{$\bullet$}
} 
\foreach \d in {2,4,...,6}
{
\foreach \t in {0,3,...,9}
{
\pgftext[at=
\pgfpointlineattime{\t}{\pgfpoint{0cm}{\d cm}}{\pgfpoint{1cm}{\d cm}}]{$\bullet$}
}
}
\foreach \d in {0,2,...,4}
{
\draw[->] (6,\d) -- node[below] {$0$} (3+0.1,\d+1.9);
\foreach \t in {0,6}
{\draw[->] (3+\t,\d) -- node[above,sloped] {$\simeq$} (\t+0.1,\d+1.9);}}
\foreach \t in {2,3}{
\pgftext[at=
\pgfpointlineattime{\t}{\pgfpoint{0cm}{-0.4cm}}{\pgfpoint{3cm}{-0.4cm}}]{$(\Sigma x)^{\t}$}}
\draw[->,red!60!black] (6,0) -- node[above right]{$d^{2n}$} (0.1,4.9);
\draw[-,red!60!black] (6,5) --  (3,7.5);
\draw[->,red!60!black] (10,10/6) --  (6.1,4.9);
\draw (0,5.1) node[right]{\begin{footnotesize}$2n-1$\end{footnotesize}} ;
\draw (0,-0.4) node[]{$\mathbb{F}_2$} ;
\draw (3,-0.4) node[]{$\Sigma x$} ;

\draw (1.2,0.7) node[]{$d^n$} ;

\draw (0.3,0.3) node[]{$0$} ;
\draw[red!60!black] (8.6,3.3) node[]{$0$} ;
\draw (3.3,0.3) node[]{$n$} ;

\draw (-0.4,2) node[]{$x$} ;
\draw (-0.4,4) node[]{$x^2$} ;
\draw (-0.4,6) node[]{$x^3$} ;
\draw (-0.6,5) node[]{$\xi_1(x)$} ;

\draw (+0,2.1) node[right]{\begin{footnotesize}$n-1$\end{footnotesize}} ;
\draw (+0,4.1) node[right]{\begin{footnotesize}$2(n-1)$\end{footnotesize}} ;
\draw (+0,6.1) node[right]{\begin{footnotesize}$3(n-1)$\end{footnotesize}} ;
\end{tikzpicture}
\end{figure}
All the brackets are zero since $[\Sigma x;\Sigma x]_0=2(\Sigma x)^2=0$ and $[\xi_0(\Sigma x); \Sigma x]_0= [\Sigma x;[\Sigma x;\Sigma x]_0]_0$~; it remains: the element $\Sigma x$, the restriction $\xi_0(\Sigma x)=(\Sigma x)^2$ and its   iterations, plus the resulting products.
This spectral sequence split  
into elementary spectral sequences on the transgressive elements $\Sigma x$, $\xi_0(\Sigma x)$, $\xi_0^2(\Sigma x)$,... with  differentials the corresponding transgression.
Indeed, $T(\Sigma x)$ can be written additively as the exterior algebra on the transgressive elements $\Sigma x,\xi_0^k(\Sigma x)$, $k\geq 1$. Also, $S(L_{1r}(x))$ can be written additively as polynomial algebra on the image of  these transgressive elements $x,\xi_1^k(x)$, $k\geq 1$. 
Then the  elementary  spectral sequences are of the form
\[
 \Lambda(y)\ot P(t(y)),
\]
where $y$ run through the transgressive elements of $T(\Sigma x)$. 
Such a presentation of  $T(\Sigma x)$ can be extended  to  $TH^+_*(\Sigma X)$ via the $[-;-]_0$-\emph{elementary products}.
This presentation  is due to P. J. Hilton \cite{Hilton}. 
Let us consider the Lie algebra  $L(H^+_*(\Sigma X))$ without restriction, and with bracket $[-;-]_0$.
The  $[-;-]_0$-{elementary products} are defined as follows. \\
The elements  $x\in H^+_*(\Sigma X)$ are the $[-;-]_0$-elementary product of weight  $1$.
Suppose that the $[-;-]_0$-{elementary products} of weight $j$ have been defined for $j<k$.
Then, a product of weight $k$ is an element $[x;y]_0$ such that:
\begin{enumerate}
\item $x$ is a $[-;-]_0$-{elementary products} of weight $u$;
\item $y$ is a $[-;-]_0$-{elementary products} of weight $v$;
\item $u+v=k$;
\item $x<y$, and if $y=[z;t]_0$ for $z$ and $t$ elementary, then $z\leq x$.
\end{enumerate}
We order the generators $(\Sigma x_i)_i$ of $H^+_*(\Sigma X)$ by their dimension; the $[-;-]_0$-{elementary products}  are the elements 
\[
\Sigma x_k, \cdots,  [\Sigma x_k;\Sigma x_l]_0, \cdots, [\Sigma x_j;[\Sigma x_k;\Sigma x_l]_0]_0,\cdots 
\]
where $j<k<l$. 
From \cite{Hilton} we obtain 
\[
L(H^+_*(\Sigma X))\cong \text{span}\{ y~|~ y~\text{a}~[-;-]_0-\text{elementary product}\}, 
\]
as vector spaces.
Denoting  by $p_j$ the  $[-;-]_0$-elementary products we have the isomorphism of vector spaces
\[
 TH^+_*(\Sigma X)\cong \bigotimes_j P(p_j),
\]
where $P$ denotes the polynomial algebra.
To take into account the restriction, we write $P(p_j)$ as $\wedge_{k\geq 0} (p_j^{2^k})$ where $\wedge$ denotes  the exterior algebra\footnote{To avoid any ambiguity, we  recall that over  $\mathbb{F}_2$, the exterior algebra $\wedge V$  of a vector space $V$ is defined as the  tensor algebra $TV$ quotiented by the ideal generated by $\{ v\ot v ~|~ v\in TV\}$.}.
We obtain a decomposition into $[-;-]_0$-elementary products and $\xi_0$-\emph{elementary products}, 
\[
 TH^+_*(\Sigma X)\cong \bigotimes_j \bigwedge_{k\geq 0} (p_j^{2^k}).
\]
On the other hand, we recall that $L_{1r}=\ds L_{r}\s$, that is $[x;y]_1=\ds[\s x;\s y]_0$ and $\xi_1(x)=\ds \xi_0(\s x)$.
Then we obtain that 
\[
 SL_{1r}(H^+_*(\Sigma X))\cong S(\text{span}\{ y,\xi_1^k(y)~|~ y~\text{is a}~[-;-]_1-\text{elementary product},~k\geq1\}).
\]
Then the  spectral sequence $E$ decomposes into tensor products of elementary spectral sequences 
\[
 \wedge(y)\ot P(t(y)),
\]
where $y$ run through the set of  $[-;-]_0$-elementary products and  $\xi_0$-elementary products.

Thus the spectral sequence $E$ converges to $E^{\infty}_{0,0}=\mathbb{F}_2$, $E^{\infty}_{p,q}=0$ if $(p,q)\neq (0,0)$.
\\

Return to the application 
\[
 T(\iota)\ot S(L_{1r}(\iota)): E^{2}\to \ov{E}^{2}.
\]
From the comparison theorem \cite[Theorem 11.1 p. 355]{MacLaneBookHomology},
it is sufficient to show that: the application $\Phi^2:=T(\iota)\ot S(L_{1r}(\iota))$ induces morphism of spectral sequences $\Phi:E\to \ov{E}$; the morphism $T(\iota)$ is an isomorphism; and $\Phi^{\infty}:E^{\infty}\to \ov{E}^{\infty}$ is an isomorphism.
\\

We show that the application $\Phi^2$:
\begin{center}
\begin{tikzpicture} [>=stealth,thick,draw=black!50, arrow/.style={->,shorten >=1pt}, point/.style={coordinate}, pointille/.style={draw=red, top color=white, bottom color=red}]
\matrix[row sep=9mm,column sep=26mm,ampersand replacement=\&]
{
   \node (b1) {$T H^+_*(\Sigma X)\ot SL_{1r}H^+_*(X)$} ; \&   \node (b2){$H_*(\Omega {H}_*(\Sigma^2 X))\ot H_*(\Omega^2H_*(\Sigma^2 X))$} ; \\
}; 
\path
	  
   	  (b1)     edge[above,arrow,->]      node {$T(\iota)\ot S(L_{1r}(\iota))$}  (b2)
        ; 
\end{tikzpicture}
\end{center}
induces a morphism of spectral sequences.

To do this, let us write with details the differential of $\mathcal{E}=B\ot A$.
The axioms (R1) and (R2) determine a contracting homotopy on $\mathcal{E}$:
\begin{align}
 s(1)&= 0,~~ s(\ds x)= x,~~ s(x)=0,~~& & \text{for}~~ \ds x\in A\tag{R1}\label{R1}\\
 s(xy)&= s(x)y + \epsilon(x)s(y),~~& &\text{for}~~ x\in \mathcal{E}, y\in A.\tag{R2}\label{R2}
\end{align}
The axioms (D1) and (D2)  determine he differential of $\mathcal{E}$:
\begin{align}
 d(x)&=(1-sd)(\ds x),~~ &  &\text{for}~~ x\in B \tag{D1}\label{D1}\\
 d(xy)&= d(x)y +(-1)^{|x|} xd(y),~~ & &\text{for}~~ x\in \mathcal{E}, y\in A.\tag{D2}\label{D2}
\end{align}
Then, for all ${x\in \ds H^+_{p+1}(\Sigma^2 X) \subset B_p}$, we obtain 
\[
d(x)= \ds x\in \s^{-2}H^+_{p+1}(\Sigma^2X)\subset F_0\mathcal{E}, 
\]
since the coproduct $\nabla_0$ is primitive on $H^+_{*}(\Sigma^2 X)$.
Thus, for such elements $x$ of degree $p$, we have $d^r(x)=0$ if $r< p$.
Moreover, for all ${x\in \ds H^+_{p+1}(\Sigma^2 X)\subset \ov{E}^p_{p,0} \subset B_p}$, we obtain 
\[
d^p(x)= \ds x\in \s^{-2}H^+_{p+1}(\Sigma^2X){\subset \ov{E}^p_{0,p-1} \subset A_{p-1}}. 
\]
More generally, the degree $p$ elements in $B_p=\Omega H_{*}(\Sigma^2 X)_p$, which are (after desuspension) cycles  for $d_A$, belong to $\ov{E}^p_{p,0}$. 
On the other hand, from the construction (see \cite[section 4]{Kadeishvili}) of the operation 
\[
\cup_1:\Omega^2 H_*(\Sigma^2 X)\ot \Omega^2 H_*(\Sigma^2X) \to \Omega^2 H_*(\Sigma^2 X), 
\]
the Gerstenhaber bracket  $[-;-]_1=\cup_1 \circ (1-\tau)$ is  compatible with suspensions, i.e. for all $x,y\in {\s^{-2} H^+_*(\Sigma^2 X)\subset \ds\Omega H_*(\Sigma^2 X)}$, we have $[x;y]_1=\ds [\s x;\s y]_0$. 
And similarly for the restriction $\xi_1(x)=\ds \xi_0(\s x)$.
Then
\begin{align*}
t(ad_0(\ds x_1)\cdots ad_0(\ds x_{k-1})(\ds x_{k}))&=ad_1(x_1)\cdots ad_1(x_{k-1})(x_{k}) \\
t(\xi_0^k(\ds x))&=\xi_1^k(x),
\end{align*}
{for all} $\ds x, \ds x_1,...,\ds x_k \in \ds {H}^+_*(\Sigma^2 X)$.
Finally, since $\Phi^2$ is an algebra morphism, it commutes with the  differentials.
Therefore $\Phi$ is a morphism of spectral sequences.
\\
The morphisms $\Phi^{\infty}$ and  $T(\iota)$ are clearly  isomorphisms. This ends the proof.
\end{proof}
\newpage
\section{The homology of the double cobar construction of a double suspension over  \texorpdfstring{$\mathbb{Q}$}{Lg}}
In this section the coefficient field is $\mathbb{Q}$.
\subsection{Free Gerstenhaber algebra structure}
\begin{theo}\label{th: GerstLibreCobar}
Let the coefficient field be $\mathbb{Q}$. 
Then $\Omega^2 C_*(\Sigma^2X)$ has a Connes-Moscovici structure. 
The induced  Gerstenhaber algebra structure on $H_*(\Omega^2 C_*(\Sigma^2 X))$ is free on the reduced homology  $H^+_*(X)$.
\end{theo}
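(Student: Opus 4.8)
The plan is to run the argument of Theorem~\ref{th: GerstLibreCobar1restreint} with the simplification that over $\mathbb{Q}$ there is no restriction (no Dyer--Lashof operation) to carry along. By Proposition~\ref{prop: reduced G-cog} the double cobar construction $\Omega^2 C_*(\Sigma^2 X)$ carries the Connes--Moscovici structure, and by the formality Theorem~\ref{th: formality} the homotopy G-algebras $\Omega^2 C_*(\Sigma^2 X)$ and $\Omega^2 H_*(\Sigma^2 X)$ are quasi-isomorphic. Hence it is enough to show that the homotopy G-algebra structure on $\Omega^2 H_*(\Sigma^2 X)$ induces on homology the free Gerstenhaber algebra $SL_1(H^+_*(X)) = S(L_1(H^+_*(X)))$, where $L_1 = \ds L \s$ is the free degree-one graded Lie algebra functor (with $L$ the ordinary free graded Lie algebra functor) and $S$ is the free graded-commutative algebra functor.

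Write $[x;y]_1 = x\cup_1 y + y\cup_1 x$ for the Gerstenhaber bracket on $H_*(\Omega^2 H_*(\Sigma^2 X))$; over $\mathbb{Q}$ there is no squaring operation to add. The inclusion $\iota: H^+_*(X) \cong \s^{-2}H^+_*(\Sigma^2 X) \hookrightarrow H_*(\Omega^2 H_*(\Sigma^2 X))$, together with the freeness of $SL_1$, produces a morphism of Gerstenhaber algebras $SL_1(\iota): SL_1(H^+_*(X)) \to H_*(\Omega^2 H_*(\Sigma^2 X))$, and the task is to prove it is an isomorphism. As in the $\mathbb{F}_2$ case I would introduce the acyclic Adams--Hilton total space $\mathcal{E} = \Omega H_*(\Sigma^2 X) \ot \Omega^2 H_*(\Sigma^2 X)$, with differential and contracting homotopy fixed by the axioms (R1), (R2), (D1), (D2), filtered by the $\Omega H_*(\Sigma^2 X)$-degree; this gives a spectral sequence $\ov{E}$ with
\[
\ov{E}^2_{p,q} = H_p(\Omega H_*(\Sigma^2 X)) \ot H_q(\Omega^2 H_*(\Sigma^2 X))
\]
converging to $\mathbb{Q}$ concentrated in bidegree $(0,0)$. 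On the other hand I would build, following F.~Cohen~\cite{FredCohen} (who uses it as a model for the Serre spectral sequence of $\Omega^3\Sigma^3 X \to P\Omega^2\Sigma^3 X \to \Omega^2\Sigma^3 X$), a spectral sequence of dg-algebras $E$ with $E^2 = TH^+_*(\Sigma X) \ot SL_1(H^+_*(X))$ and transgressions
\begin{align*}
t(\Sigma x) &= x,\\
t\bigl(ad_0(\Sigma x_1)\cdots ad_0(\Sigma x_{k-1})(\Sigma x_k)\bigr) &= ad_1(x_1)\cdots ad_1(x_{k-1})(x_k),
\end{align*}
extended as a derivation, where $ad_j(x)(y) = [y;x]_j$. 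The absence of a $\xi$-transgression is exactly what the rational hypothesis provides.

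I would then show $E$ converges to $\mathbb{Q}$. By the Milnor--Moore and Poincar\'e--Birkhoff--Witt theorems over $\mathbb{Q}$, $TH^+_*(\Sigma X)$ is the universal enveloping algebra of the free graded Lie algebra $L(H^+_*(\Sigma X))$ for the bracket $[-;-]_0$; taking Hilton's basic products (the $[-;-]_0$-elementary products) $p_j$ as a homogeneous basis of $L(H^+_*(\Sigma X))$ yields an isomorphism of graded vector spaces $TH^+_*(\Sigma X) \cong \bigotimes_j S(\mathbb{Q}\, p_j)$, where $S(\mathbb{Q}\, p_j)$ is a polynomial algebra when $|p_j|$ is even and an exterior algebra when $|p_j|$ is odd; dually $SL_1(H^+_*(X)) \cong \bigotimes_j S(\mathbb{Q}\, t(p_j))$. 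Since every $p_j$ is transgressive with transgression $t(p_j)$, the spectral sequence $E$ splits as a tensor product of elementary Koszul-type spectral sequences $S(\mathbb{Q}\, p_j) \ot S(\mathbb{Q}\, t(p_j))$, each acyclic in positive total degree; hence $E^\infty$ is $\mathbb{Q}$ in bidegree $(0,0)$ and zero otherwise, just like $\ov{E}^\infty$.

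Finally, $\Phi^2 := T(\iota) \ot SL_1(\iota): E^2 \to \ov{E}^2$ serves as the comparison map. To see that it is a morphism of spectral sequences I would write out the Adams--Hilton differential on $\mathcal{E}$ and use the primitivity of the coproduct $\nabla_0$ on $H^+_*(\Sigma^2 X)$: for $x\in \ds H^+_{p+1}(\Sigma^2 X) \subset \Omega H_*(\Sigma^2 X)_p$ one gets $d(x) = \ds x \in \s^{-2}H^+_{p+1}(\Sigma^2 X) \subset F_0\mathcal{E}$, so such generators are transgressive with transgression the inclusion; using that Kadeishvili's $\cup_1$ on $\Omega^2 H_*(\Sigma^2 X)$ is compatible with the suspension, i.e. $[x;y]_1 = \ds[\s x;\s y]_0$, the higher transgressions in $\ov{E}$ agree with those defining $E$; and since $\Phi^2$ is an algebra morphism it commutes with the differentials. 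As $T(\iota)$ is an isomorphism — the cobar differential on $\Omega H_*(\Sigma^2 X)$ vanishes by primitivity of $\nabla_0$, so $H_*(\Omega H_*(\Sigma^2 X))$ is just the tensor algebra $TH^+_*(\Sigma X)$ — and $\Phi^\infty: \mathbb{Q} \to \mathbb{Q}$ is an isomorphism, the comparison theorem~\cite[Theorem~11.1, p.~355]{MacLaneBookHomology} forces $\Phi^2 = T(\iota) \ot SL_1(\iota)$ to be an isomorphism, hence so is $SL_1(\iota)$, which is the assertion. The main obstacle, as over $\mathbb{F}_2$, is not the algebra but checking carefully that the Adams--Hilton differential on $\mathcal{E}$ realizes the stated transgressions; this rests on the primitivity of $\nabla_0$ together with the suspension-compatibility of $\cup_1$. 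By contrast, the PBW/Hilton decomposition and the sign bookkeeping for odd basic products are routine over $\mathbb{Q}$, and one is spared the extra layer of $\xi_0$-elementary products that must be handled in characteristic two.
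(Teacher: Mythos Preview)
Your argument is correct and follows essentially the same route as the paper's own proof: reduce to $\Omega^2 H_*(\Sigma^2 X)$ via formality, set up the Adams--Hilton spectral sequence $\ov{E}$ and Cohen's model spectral sequence $E$, decompose $E$ into elementary acyclic pieces via Hilton's basic products (the parity distinction between polynomial and exterior factors replacing the $\xi_0$-layer from the $\mathbb{F}_2$ case), and apply the comparison theorem. One minor slip in your parenthetical: the path-loop fibration that $E$ models here is the $n=1$ case $\Omega^{2}\Sigma^{2} X \to P\Omega\Sigma^{2} X \to \Omega\Sigma^{2} X$, not $\Omega^{3}\Sigma^{3} X \to P\Omega^{2}\Sigma^{3} X \to \Omega^{2}\Sigma^{3} X$.
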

It was proved in \cite{FredCohen} that $H_*(\Omega^2\Sigma^2 Y)$ is the free Gerstenhaber algebra on $H_*^+(Y)$, then we obtain
\begin{cor}
 The  Gerstenhaber algebras $H_*(\Omega^2 C_*(\Sigma^2X))$ and  $H_*(\Omega^2|\Sigma^2 X|)$ are isomorphic. 
\end{cor}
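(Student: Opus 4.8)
The plan is to mirror the proof of Theorem \ref{th: GerstLibreCobar1restreint} step by step, dropping the complications that only arise in characteristic two. First I would invoke Proposition \ref{prop: reduced G-cog} to know that $C_*(\Sigma^2 X)$ is a reduced (hence involutive) homotopy G-coalgebra whose Alexander--Whitney coproduct is primitive, so that $\Omega^2 C_*(\Sigma^2 X)$ carries the Connes--Moscovici structure. Then the formality Theorem \ref{th: formality} gives a quasi-isomorphism of homotopy BV-algebras (in particular of homotopy G-algebras) between $\Omega^2 C_*(\Sigma^2 X)$ and $\Omega^2 H_*(\Sigma^2 X)$. Since the Gerstenhaber algebra on homology is a functorial quotient of the homotopy G-algebra structure (via the bracket $[x;y]_1 = x\cup_1 y - (-1)^{(|x|+1)(|y|+1)} y\cup_1 x$ and the commutative product, cf. \cite{Kadeishvili}), it remains to show that the homotopy G-algebra $\Omega^2 H_*(\Sigma^2 X)$ induces on $H_*(\Omega^2 H_*(\Sigma^2 X))$ the free Gerstenhaber algebra $S(L_1(H^+_*(X)))$ on $H^+_*(X)$, where $L_1$ denotes the free degree-one graded Lie algebra functor.

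For this I would set up the same Adams--Hilton style acyclic two-sided bar-type construction $\mathcal{E} = B\ot A$ with $B=\Omega H_*(\Sigma^2 X)$ and $A=\Omega^2 H_*(\Sigma^2 X)$, filtered by $F_r\mathcal{E} = \bigoplus_{p\le r} B_p\ot A$, producing a spectral sequence $\ov{E}$ with $\ov{E}^2_{p,q} = H_p(\Omega H_*(\Sigma^2 X))\ot H_q(\Omega^2 H_*(\Sigma^2 X))$ and $\ov{E}^\infty$ concentrated in bidegree $(0,0)$ by acyclicity of the total space (\cite[Lemme 2.3]{AdamsHilton}). In parallel I would build the algebraic model $E$ of F. Cohen's path-fibration spectral sequence for $\Omega\Sigma^2 X \to P \to \Omega^2\Sigma^2 X$ shifted appropriately, with $E^2 = TH^+_*(\Sigma X)\ot SL_1(H^+_*(X))$ and transgressions $t(\Sigma x)=x$, $t(ad_0(\Sigma x_1)\cdots ad_0(\Sigma x_{k-1})(\Sigma x_k)) = ad_1(x_1)\cdots ad_1(x_{k-1})(x_k)$; note that in characteristic zero there is no restriction/Dyer--Lashof generator $\xi$, so the decomposition simplifies — using the Milnor--Moore / Poincaré--Birkhoff--Witt splitting of $TH^+_*(\Sigma X)$ into an exterior algebra on odd-degree Hilton $[-;-]_0$-elementary products tensor a polynomial algebra on even-degree ones, each elementary spectral sequence being of the classical transgressive form $\Lambda(y)\ot P(t(y))$ (or $P(y)\ot$ truncated, matching the parity bookkeeping of F. Cohen \cite{FredCohen}). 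This shows $E^\infty$ is also concentrated in bidegree $(0,0)$.

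Then, exactly as before, the inclusion $\iota: H^+_*(X)\cong \s^{-2} H^+_*(\Sigma^2 X)\hookrightarrow H_*(\Omega^2 H_*(\Sigma^2 X))$ extends by freeness of $S$ and $L_1$ to a Gerstenhaber algebra morphism $S(L_1(\iota))$, and $\Phi^2 := T(\iota)\ot S(L_1(\iota)): E^2\to \ov{E}^2$ is an algebra map that commutes with the differentials — here one uses that $\nabla_0$ is primitive on $H^+_*(\Sigma^2 X)$, so that the degree-$p$ generators of $B_p$ are $d^p$-transgressive, together with the compatibility of $\cup_1$, hence of $[-;-]_1$, with the double suspension (giving $[x;y]_1 = \ds[\s x;\s y]_0$ for $x,y\in\s^{-2}H^+_*(\Sigma^2 X)$), so that the transgressions of $E$ match those of $\ov{E}$ under $\Phi$. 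Since $T(\iota)$ is tautologically an isomorphism (it is $T$ applied to an isomorphism) and $\Phi^\infty: E^\infty\to\ov{E}^\infty$ is an isomorphism (both sides are $\mathbb{Q}$ in bidegree $(0,0)$), the comparison theorem \cite[Theorem 11.1 p.355]{MacLaneBookHomology} forces $\Phi^2$, hence $S(L_1(\iota))$, to be an isomorphism, which is exactly the freeness statement.

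The main obstacle I anticipate is purely the bookkeeping of the parity decomposition in the $E$-side model: in characteristic two every Hilton elementary product contributes the same way and the extra $\xi_0$-elementary products are needed, whereas over $\mathbb{Q}$ one must instead carefully match odd- versus even-degree elementary products with F. Cohen's description of $H_*(\Omega^2\Sigma^2 X)$ as $S(L_1(H^+_*(X)))$ — i.e. verifying that the transgressive elements and the classical Koszul-type elementary spectral sequences $\Lambda(y)\ot P(t(y))$ (odd $y$) and $P(y)\ot \Lambda(t(y))$-type truncations (even $y$) assemble so that $E^\infty$ collapses. Everything else — the formality input, the acyclicity of $\mathcal{E}$, the functoriality of the Gerstenhaber quotient, and the comparison-theorem conclusion — is either already available in the excerpt or a routine adaptation of the $\mathbb{F}_2$ argument.
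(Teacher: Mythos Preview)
Your proposal is correct and follows essentially the paper's approach: what you have written is really the proof of Theorem~\ref{th: GerstLibreCobar} (freeness of the Gerstenhaber structure), which the paper establishes by exactly the same formality-plus-comparison-of-spectral-sequences argument you outline, with the same Hilton decomposition and parity split into elementary spectral sequences $P(y)\otimes S(t(y))$ and $S(z)\otimes P(t(z))$ for even/odd $[-;-]_0$-elementary products. The Corollary itself is then the one-line deduction that both sides are free Gerstenhaber algebras on $H^+_*(X)$, by the Theorem on one side and by F.~Cohen's result \cite{FredCohen} on the other.
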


\begin{proof}[Proof of Theorem \ref{th: GerstLibreCobar}]
 The proof is analogous to that of Theorem \ref{th: GerstLibreCobar1restreint}.
We replace the functor $L_{1r}$ by $L_1$ adjoint of the forgetful functor from the category of degree $1$ Lie algebras to the category  of graded vector spaces.
The symmetric algebra $S(V)$ on a graded vector space $V$, is $TV/I$ where $I$ is the ideal generated by the elements $v\ot w -(-1)^{|v||w|} w\ot v$. Then $S(L_1(V))$ is a  Gerstenhaber algebra; its bracket is  $[v,w]_1=vw -(-1)^{(|v|-1)(|w|-1)}wv$ for $v,w\in V$.
The main difference is the decomposition of the model spectral sequence $E$. 
Indeed, on a generator $\Sigma x\in H^+_n(\Sigma X)$ of  even degree $n$, the  spectral sequence is:
\begin{center}
\scalebox{1}{
\begin{tikzpicture}[>=stealth,thick,draw=black!50, arrow/.style={->,shorten >=1pt}, point/.style={coordinate}, pointille/.style={draw=red, top color=white, bottom color=red},scale=0.8]

\draw[->] (0,0) -- (0,7);
\draw[->] (0,0) -- (10,0);


\foreach \t in {2,3}
{\pgftext[at=
\pgfpointlineattime{\t}{\pgfpoint{0.3cm}{+0.3cm}}{\pgfpoint{3.3cm}{+0.3cm}}]{$\t n$}
}
\foreach \t in {0,3,...,9}
{
\pgftext[at=
\pgfpointlineattime{\t}{\pgfpoint{0cm}{0cm}}{\pgfpoint{1cm}{0cm}}]{$\bullet$}
} 
\foreach \d in {2}
{
\foreach \t in {0,3,...,9}
{
\pgftext[at=
\pgfpointlineattime{\t}{\pgfpoint{0cm}{\d cm}}{\pgfpoint{1cm}{\d cm}}]{$\bullet$}
}
}
\foreach \d in {0}
{
\draw[->] (6,\d) -- node[above,sloped] {$\simeq$} (3+0.1,\d+1.9);
\foreach \t in {0,6}
{\draw[->] (3+\t,\d) -- node[above,sloped] {$\simeq$} (\t+0.1,\d+1.9);}}
\foreach \t in {2,3}{
\pgftext[at=
\pgfpointlineattime{\t}{\pgfpoint{0cm}{-0.4cm}}{\pgfpoint{3cm}{-0.4cm}}]{$\mathbb{Q}(\Sigma x)^{\t}$}}
\draw (0,-0.4) node[]{$\mathbb{Q}$} ;
\draw (3,-0.4) node[]{$\mathbb{Q}\Sigma x$} ;

\draw (1.2,0.7) node[]{$d^n$} ;

\draw (0.3,0.3) node[]{$0$} ;
\draw (3.3,0.3) node[]{$n$} ;

\draw (-0.4,2) node[]{$\mathbb{Q}x$} ;

\draw (+0,2.1) node[right]{\begin{footnotesize}$n-1$\end{footnotesize}} ;
\end{tikzpicture}}
\end{center}
On a generator $\Sigma x\in H^+_n(\Sigma X)$ of odd degree $n$, the spectral sequence is:
\begin{center}
\scalebox{1}{
\begin{tikzpicture}[>=stealth,thick,draw=black!50, arrow/.style={->,shorten >=1pt}, point/.style={coordinate}, pointille/.style={draw=red, top color=white, bottom color=red},scale=0.8]

\draw[->] (0,0) -- (0,7);
\draw[->] (0,0) -- (10,0);

\pgfpathcircle{\pgfpoint{0cm}{5cm}}{2pt}
\pgfpathcircle{\pgfpoint{6cm}{5cm}}{2pt}
 \pgfusepath{fill}

\foreach \t in {2,3}
{\pgftext[at=
\pgfpointlineattime{\t}{\pgfpoint{0.3cm}{+0.3cm}}{\pgfpoint{3.3cm}{+0.3cm}}]{$\t n$}
}
\foreach \t in {0,3,...,9}
{
\pgftext[at=
\pgfpointlineattime{\t}{\pgfpoint{0cm}{0cm}}{\pgfpoint{1cm}{0cm}}]{$\bullet$}
} 
\foreach \d in {2,4,...,6}
{
\foreach \t in {0,3,...,9}
{
\pgftext[at=
\pgfpointlineattime{\t}{\pgfpoint{0cm}{\d cm}}{\pgfpoint{1cm}{\d cm}}]{$\bullet$}
}
}
\foreach \d in {0,2,...,4}
{
\draw[->] (6,\d) -- node[below] {$0$} (3+0.1,\d+1.9);
\foreach \t in {0,6}
{\draw[->] (3+\t,\d) -- node[above,sloped] {$\simeq$} (\t+0.1,\d+1.9);}}
\foreach \t in {2,3}{
\pgftext[at=
\pgfpointlineattime{\t}{\pgfpoint{0cm}{-0.4cm}}{\pgfpoint{3cm}{-0.4cm}}]{$\mathbb{Q}(\Sigma x)^{\t}$}}
\draw[->,red!60!black] (6,0) -- node[above right]{$d^{2n}$} (0.1,4.9);
\draw[-,red!60!black] (6,5) --  node[above right]{$0$} (3,7.5);
\draw[->,red!60!black] (10,10/6) -- node[above right,sloped]{$\simeq$} (6.1,4.9);
\draw (0,5.1) node[right]{\begin{footnotesize}$2n-1$\end{footnotesize}} ;
\draw (0,-0.4) node[]{$\mathbb{Q}$} ;
\draw (3,-0.4) node[]{$\mathbb{Q}\Sigma x$} ;


\draw (1.2,0.7) node[]{$d^n$} ;

\draw (0.3,0.3) node[]{$0$} ;
\draw (3.3,0.3) node[]{$n$} ;

\draw (-0,2) node[left]{$\mathbb{Q}x$} ;
\draw (-0,4) node[left]{$\mathbb{Q}x^2$} ;
\draw (-0,6) node[left]{$\mathbb{Q}x^3$} ;
\draw (-0,5) node[left]{$\mathbb{Q}[x;x]_1$} ;

\draw (+0,2.1) node[right]{\begin{footnotesize}$n-1$\end{footnotesize}} ;
\draw (+0,4.1) node[right]{\begin{footnotesize}$2(n-1)$\end{footnotesize}} ;
\draw (+0,6.1) node[right]{\begin{footnotesize}$3(n-1)$\end{footnotesize}} ;
\end{tikzpicture}}
\end{center}
The spectral sequence $E$ decomposes into elementary spectral sequences
\[
 P(y)\ot S(t(y)) ~~\text{and}~~ S(z)\ot P(t(z))
\]
where $y$ and $z$ run through the respectively  even and odd degree  $[-;-]_0$-elementary products of $TH^+_*(\Sigma X)$.
\end{proof}
\subsection{BV-algebra structure}\label{BV-alg}
\subsubsection{Free Batalin-Vilkovisky algebra}\label{sec: free BV}
We fix a connected vector space $V$ and an operator $\Delta: V_* \to V_{*+1}$ such that $\Delta^2=0$.
We define the free Batalin-Vilkovisky algebra on $(V,\Delta)$ as
\[
 S(L_1(V\oplus \Delta(V))).
\]
The operator $\Delta$ is extended as BV-operator via the formula 
\[
 \Delta(xy)=\Delta(x)y+ (-1)^{|x|}x\Delta(y)+(-1)^{|x|}[x;y]_1,
\] 
for all homogeneous elements $x,y \in L_1(V)$, and
\[
 \Delta([x;y]_1)=[\Delta(x);y]_1+(-1)^{|x|-1}[x;\Delta(y)]_1,
\]
for all homogeneous elements  $x,y \in L_1(V)$.
\\
If $\Delta:V\to V$ is trivial then $\Delta:  S(L_1(V)) \to S(L_1(V))$ is the BV-operator given by the extension of the  Gerstenhaber bracket. The resulting  Batalin-Vilkovisky structure is called \emph{canonical Batalin-Vilkovisky  algebra structure} $S(L_1(V))$, cf. \cite[Section 1.1]{TamarkinTsygan}.
\\
Moreover, if the free Gerstenhaber algebra $S(L_1(V))$ is a Batalin-Vilkovisky algebra such that the BV-operator is zero when it is restricted to $V$, then $S(L_1(V))$ is the canonical Batalin-Vilkovisky algebra.

\subsubsection{Batalin-Vilkovisky algebra on the homology of a double loop space}
Let us make explicit the Batalin-Vilkovisky structure  given by E. Getzler \cite{Getzler} on $H_*(\Omega^2\Sigma^2X)$.\\

Let $Y$ be a $S^1$-space, that is a pointed topological space with base point $*$, endowed with an action of the circle  $S^1$ such that $g\cdot *=*$ for all $g\in S^1$.
E. Getzler shows \cite{Getzler} that the  diagonal action of $S^1$ on $\Omega^2 Y=\Hom(S^2,Y)$:
\begin{align*}
\Delta: S^1\times \Hom(S^2,Y)&\to \Hom(S^2,Y)\\
 (g,\phi)&\mapsto (g\cdot \phi)(s):=g\cdot \phi (g^{-1}\cdot s),
\end{align*}
   induces a Batalin-Vilkovisky algebra structure on $H_*(\Omega^2 Y)$.
 For a double suspension $Y=\Sigma^2 X=S^2\wedge X$,
we endowed $Y$ with the  canonical action of  $S^1$ on $S^2$ and the trivial action  on $X$ i.e. $g\cdot (s\wedge x)=(g\cdot s)\wedge x$ for all $g\in S^1$ et $s\wedge x \in S^2\wedge X$.
Then the adjunction $X\to \Omega^2 \Sigma^2 X$ is equivariant for the trivial action on $X$ and the diagonal action on  $\Omega^2 \Sigma^2 X$. 
As a consequence, cf. \cite{MenichiGaudens}, ($H_*(\Omega^2\Sigma^2X),\Delta)$ is a canonical Batalin-Vilkovisky algebra, free on  $H_*^+(X)$.

\subsubsection{Homology of double cobar construction}
\begin{theo}\label{th: BV sur 2Cobar}
 Let the coefficient field be $\mathbb{Q}$.
 Then $\Omega^2 C_*(\Sigma^2X)$ has a Connes-Moscovici structure. The induced BV-algebra structure on $(H_*(\Omega^2 C_*(\Sigma^2 X)),\Delta_{CM})$ is free on the reduced homology $H^+_*(X)$.
\end{theo}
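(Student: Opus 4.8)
The plan is to reduce the statement, by the formality Theorem~\ref{th: formality} and the free Gerstenhaber computation of Theorem~\ref{th: GerstLibreCobar}, to one single computation of $\Delta_{CM}$ on the free generators, and then to invoke the characterization of the canonical Batalin--Vilkovisky algebra recalled in Subsection~\ref{sec: free BV}. First, by Proposition~\ref{prop: reduced G-cog}, $C_*(\Sigma^2 X)$ is a reduced (hence involutive) homotopy G-coalgebra whose Alexander--Whitney coproduct is primitive, so $\Omega^2 C_*(\Sigma^2 X)$ does carry the Connes--Moscovici structure. Applying Theorem~\ref{th: formality} to $C=C_*(\Sigma^2 X)$ yields a quasi-isomorphism of homotopy BV-algebras between $\Omega^2 C_*(\Sigma^2 X)$ and $\Omega^2 H_*(\Sigma^2 X)$; since, by definition of an $\infty$-morphism of homotopy BV-algebras, its linear part $f_1$ commutes with the BV-operators, this induces an isomorphism of BV-algebras $\bigl(H_*(\Omega^2 C_*(\Sigma^2 X)),\Delta_{CM}\bigr)\cong\bigl(H_*(\Omega^2 H_*(\Sigma^2 X)),\Delta_{CM}\bigr)$. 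By Theorem~\ref{th: GerstLibreCobar} the underlying Gerstenhaber algebra of the target is the free Gerstenhaber algebra $S(L_1(H^+_*(X)))$, the free generators being the classes of $\s^{-2}H^+_*(\Sigma^2 X)$, i.e.\ the image of $\iota:H^+_*(X)\cong\s^{-2}H^+_*(\Sigma^2 X)\hookrightarrow H_*(\Omega^2 H_*(\Sigma^2 X))$. Moreover the relations~\eqref{eq: CM1} and~\eqref{eq: CM2} pass to homology---where $\ov{\partial}H$ disappears and $\{-;-\}$ becomes the Gerstenhaber bracket $[-;-]_1$---so this is indeed a BV-algebra. By the last remark of Subsection~\ref{sec: free BV}, it will therefore be the canonical BV-algebra on $S(L_1(H^+_*(X)))$, that is, the free BV-algebra on $(H^+_*(X),0)$, as soon as one shows that $\Delta_{CM}$ vanishes on $\iota(H^+_*(X))$.

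The core step is thus this vanishing. Write $\Hopf=\Omega H_*(\Sigma^2 X)$, so that $\Omega^2 H_*(\Sigma^2 X)=\Omega\Hopf=T(\ds\Hopf^+)$. A generator of $\iota(H^+_*(X))$ is represented by a word of length one $\s^{-2}c=\ds g$ with $g=\ds c\in\Hopf^+$ and $c\in H^+_*(\Sigma^2 X)$. Because $C_*(\Sigma^2 X)$ is a \emph{reduced} homotopy G-coalgebra with primitive Alexander--Whitney coproduct, Baues' coproduct $\nabla_0$ on $\Hopf$ is the one for which the algebra generators $\ds c$ are primitive (so the antipode satisfies $S(\ds c)=-\ds c$), and $\nabla_0$ is primitive on $H^+_*(\Sigma^2 X)$, which forces $d_{\Omega^2}(\s^{-2}c)=0$; hence $\s^{-2}c$ is a cycle. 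Now $\Delta_{CM}$ is a degree $+1$ operator built entirely out of the Hopf dg-algebra structure of $\Hopf$---its product, coproduct, antipode, (co)unit and internal differential---and I would argue by a weight count: a length-$\ell$ word $\ds g_1\ot\cdots\ot\ds g_\ell$ has cobar-degree $\bigl(\sum|g_i|\bigr)-\ell$, while the internal weight $\sum|g_i|$ cannot exceed $|g|$ because those structure maps preserve it and the differential lowers it; applying $\Delta_{CM}$ to $\ds g$ (of cobar-degree $|g|-1$) would have to produce cobar-degree $|g|$, so $\bigl(\sum|g_i|\bigr)-\ell=|g|$ together with $\sum|g_i|\le|g|$ forces $\ell=0$, which is impossible in the positive degree $|g|$. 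Equivalently, reading off the explicit Connes--Moscovici formula \cite[(2.21)]{ConnesMosc} (see also \cite{MenichiBV}): each of its terms cyclically rotates the bar-word and then either inserts the unit---which is $0$ in the reduced cobar construction---or merges two adjacent letters through the product and the antipode, and on a one-letter word the letter to be merged degenerates to an expression in $S(g)$ and $g$ that vanishes for $g\in\Hopf^+$. Either way $\Delta_{CM}(\s^{-2}c)=0$.

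With this in hand the pieces assemble: $\bigl(H_*(\Omega^2 H_*(\Sigma^2 X)),\Delta_{CM}\bigr)$ is a BV-algebra whose underlying Gerstenhaber algebra is $S(L_1(H^+_*(X)))$ and whose BV-operator vanishes on the generators, hence by Subsection~\ref{sec: free BV} it is the canonical, equivalently the free, BV-algebra on $H^+_*(X)$. Transporting this structure back along the BV-isomorphism of the first paragraph shows that $\bigl(H_*(\Omega^2 C_*(\Sigma^2 X)),\Delta_{CM}\bigr)$ is free on $H^+_*(X)$. Finally, comparing with Getzler's computation \cite{Getzler} (see also \cite{MenichiGaudens}), which identifies $(H_*(\Omega^2\Sigma^2 X),\Delta)$ with the canonical BV-algebra free on $H^+_*(X)$, one obtains the announced isomorphism $\bigl(H_*(\Omega^2 C_*(\Sigma^2 X)),\Delta_{CM}\bigr)\cong\bigl(H_*(\Omega^2|\Sigma^2 X|),\Delta\bigr)$.

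The step I expect to be the main obstacle is precisely the vanishing $\Delta_{CM}|_{\iota(H^+_*(X))}=0$: everything else is a formal assembling of results already proved, but this one forces a careful handling of the notoriously delicate Connes--Moscovici operator on short words, and it uses in an essential way the primitivity of Baues' coproduct on $H^+_*(\Sigma^2 X)$ (equivalently $S(\ds c)=-\ds c$) together with the degree constraint coming from $\Delta_{CM}$ having degree $+1$.
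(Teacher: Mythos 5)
Your proposal is correct and follows essentially the same route as the paper: Proposition \ref{prop: reduced G-cog} plus the formality Theorem \ref{th: formality} to pass to $\Omega^2 H_*(\Sigma^2 X)$, the free Gerstenhaber structure of Theorem \ref{th: GerstLibreCobar}, the vanishing of the BV-operator on the generators $\iota(H^+_*(X))$, and the canonical-BV characterization of Section \ref{sec: free BV}. The only difference is at the vanishing step: the paper gets $\Delta_{CM}|_{\ds \Omega H_*(\Sigma^2 X)}=0$ (hence on $\s^{-2}H^+_*(\Sigma^2 X)$) directly by construction, citing \cite[(2.22)]{ConnesMosc}, whereas you sketch a weight-count/inspection argument for the same fact, which is valid for any involutive Hopf dg-algebra and does not actually need the primitivity you flag as essential there.
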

As a consequence,
\begin{cor}
The Batalin-Vilkovisky algebras $(H_*(\Omega^2 C_*(\Sigma^2X)),\Delta_{CM})$ and $(H_*(\Omega^2|\Sigma^2 X|),\Delta)$ are isomorphic.
\end{cor}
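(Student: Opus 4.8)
The plan is to follow the same strategy as in the proofs of Theorems~\ref{th: GerstLibreCobar1restreint} and~\ref{th: GerstLibreCobar}, upgrading the comparison from the level of Gerstenhaber algebras to the level of BV-algebras. By the formality Theorem~\ref{th: formality}, combined with Proposition~\ref{prop: reduced G-cog}, the double cobar constructions $\Omega^2 C_*(\Sigma^2 X)$ and $\Omega^2 H_*(\Sigma^2 X)$ are quasi-isomorphic \emph{as homotopy BV-algebras}; in particular the $\infty$-morphism's linear component $f_1$ commutes with the Connes-Moscovici operators, so it induces an isomorphism of BV-algebras on homology. Hence it suffices to identify the BV-algebra $(H_*(\Omega^2 H_*(\Sigma^2 X)),\Delta_{CM})$ with the canonical free BV-algebra on $H^+_*(X)$, i.e. with $S(L_1(H^+_*(X)))$ equipped with the canonical BV-operator described in \S\ref{sec: free BV}.

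First I would invoke Theorem~\ref{th: GerstLibreCobar} to know that the underlying Gerstenhaber algebra of $H_*(\Omega^2 H_*(\Sigma^2 X))$ is already the free Gerstenhaber algebra $S(L_1(H^+_*(X)))$; so the only remaining task is to pin down the BV-operator $\Delta_{CM}$. By the characterization recalled at the end of \S\ref{sec: free BV}, a free Gerstenhaber algebra $S(L_1(V))$ whose BV-operator vanishes on the generators $V$ is automatically the canonical one. Therefore the key step is to show that $\Delta_{CM}$ vanishes on the image of the inclusion $\iota: H^+_*(X) \cong \s^{-2}H^+_*(\Sigma^2 X) \hookrightarrow H_*(\Omega^2 H_*(\Sigma^2 X))$. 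The natural way to see this is at the chain level: a generator coming from $\s^{-2}H^+_*(\Sigma^2 X)$ sits in the lowest filtration/weight part of the double cobar construction (it is a single generator $\ds\ds c$ with $c \in H^+_*(\Sigma^2 X)$), and since $C_*(\Sigma^2 X)$ is reduced (all $\nabla_1, E^{1,k}$ vanish) and its coproduct is primitive, the Connes-Moscovici operator — built from the coproduct/antipode data of the intermediate Hopf dg-algebra $\Omega H_*(\Sigma^2 X)$ via the formula \cite[(2.21)]{ConnesMosc} — has nothing to act on in weight one, so $\Delta_{CM}(\ds\ds c) = 0$ (or at worst is a boundary). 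Granting this, $(H_*(\Omega^2 H_*(\Sigma^2 X)),\Delta_{CM})$ is forced to be the canonical free BV-algebra on $H^+_*(X)$.

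The main obstacle I anticipate is precisely this last vanishing statement: one must unwind the explicit definition of the Connes-Moscovici operator on the double cobar construction $\Omega^2 H_*(\Sigma^2 X) = \Omega(\Omega H_*(\Sigma^2 X))$ and check that, on weight-one elements (the generators $\ds$ of $\ds\bigl(\Omega H_*(\Sigma^2 X)\bigr)^+$ that lift $\s^{-2}H^+_*(\Sigma^2 X)$), it produces a trivial (or exact) result. This uses the reducedness and primitivity hypotheses of Proposition~\ref{prop: reduced G-cog} crucially — the higher co-operations $E^{1,k}$ and the homotopy $\nabla_1$, which feed into the antipode and hence into $\Delta_{CM}$, all vanish in this case. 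Once $\Delta_{CM}$ is known to vanish on generators, no further computation is needed: the abstract characterization of the canonical BV-structure does the rest, and the corollary comparing with $(H_*(\Omega^2|\Sigma^2 X|),\Delta)$ follows from the fact — recalled in \S\ref{sec: free BV} — that Getzler's BV-structure on $H_*(\Omega^2\Sigma^2 X)$ is itself the canonical free one on $H^+_*(X)$.
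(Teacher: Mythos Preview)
Your proposal is correct and follows essentially the same route as the paper: reduce to $\Omega^2 H_*(\Sigma^2 X)$ via formality, use Theorem~\ref{th: GerstLibreCobar} to identify the underlying Gerstenhaber algebra as free, check that $\Delta_{CM}$ vanishes on the generators $H^+_*(X)$, and conclude via the characterization of the canonical BV-structure together with the fact that Getzler's BV-algebra on $H_*(\Omega^2|\Sigma^2 X|)$ is also the canonical free one. The only point worth sharpening is your ``main obstacle'': the vanishing of $\Delta_{CM}$ on length-one tensors is a completely general feature of the Connes--Moscovici boundary (see \cite[(2.22)]{ConnesMosc}), so in fact $\Delta_{CM}|_{\ds \Omega H_*(\Sigma^2 X)}=0$ holds on the nose without invoking the reducedness or primitivity hypotheses --- the anticipated unwinding is unnecessary.
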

\begin{proof}[Proof of  Theorem \ref{th: BV sur 2Cobar}]
From Proposition \ref{prop: reduced G-cog} and  the formality Theorem \ref{th: formality}, $\Omega^2 H_*(\Sigma^2 X)$ has a Connes-Moscovici structure and its homology  $H_*(\Omega^2 H_*(\Sigma^2 X))$ is a Batalin-Vilkovisky algebra isomorphic to $H_*(\Omega^2 C_*(\Sigma^2 X))$.
\\
On the other hand, the  simplicial set $X$ is seen as endowed with a trivial action of $S^1$. 
Then the inclusion
\[
\iota: (H^+_*(X),0)\to H_*(\Omega^2 H_*(\Sigma^2 X)), 
\]
lifts to a unique morphism of Batalin-Vilkovisky algebras,  
\[
 SL(\iota) : S(L_1(H^+_*(X))) \to H_*(\Omega^2 H_*(\Sigma^2 X)),
\]
with the usual commutative diagram.

Let us observe the operator 
\[
 H(\Delta_{CM}): H_*(\Omega^2 H_*(\Sigma^2 X))\to H_*(\Omega^2 H_*(\Sigma^2 X)),
\]
induced by the Connes-Moscovici operator $\Delta_{CM}:\Omega^2 H_*(\Sigma^2 X)\to \Omega^2 H_*(\Sigma^2 X)$.
By construction we have (see \cite[(2.22)]{ConnesMosc}): 
\[
  \Delta_{CM}|_{\ds \Omega H_*(\Sigma^2 X)}=0~~~\text{and \textit{a fortiori}}~~~\Delta_{CM}|_{\s^{-2}H^+_*(\Sigma^2 X)}=0.
\]
Then
\[
 H(\Delta_{CM})|_{H^+_*(X)}=0.
\]
Moreover, from  Theorem \ref{th: GerstLibreCobar}, the underlying  Gerstenhaber algebra  $H_*(\Omega^2 H_*(\Sigma^2 X))$ is the Gerstenhaber algebra, free on $H^+_*(X)$.
This concludes the proof by the argument from section \ref{sec: free BV}.
\end{proof}

\section{Appendix}
In this appendix we write down the whole relations among the co-operations defining a Hirsch coalgebra.
Homotopy G-coalgebras are particular Hirsch coalgebras.

\begin{de}
A Hirsch coalgebra $(C,d,\nabla_{C},E=\{E^{i,j}\})$ is the data of a coproduct $\nabla_{E}:\Omega C\to \Omega C \ot \Omega C$ on the cobar  construction of the $1$-connected dg-coalgebra $(C,d,\nabla_{C})$ such that $\Omega C$ is a co-unital dg-bialgebra. 
\end{de} 
The  coproduct $\nabla_{E}$ corresponds to a twisting cochain 
\[
E:\ds C^+ \to \Omega C\ot \Omega C. 
\]
The $(i,j)$-component is 
\[
E^{i,j}:\ds C^+\to (\ds C^+)^{\ot i}\ot (\ds C^+)^{\ot j}. 
\]
To alleviate notation, we set $\ov{C}:=\ds C^+$.\\
Thus
\[
E^{i,j}:\ov{C}\to \ov{C}^{\ot i}\ot \ov{C}^{\ot j}. 
\]
The degree of $E^{i,j}$ is $0$.

Let us make explicit the co-unit condition, the  compatibility with the differential and the  coassociativity of the  coproduct.
\subsubsection{The co-unit condition}
The following relation $(\epsilon \ot 1)\nabla_E=(1\ot \epsilon )\nabla_E=Id$ gives:
\begin{align}
 E^{0,1}=E^{1,0}=Id_{\ov{C}} ~~~\text{and} ~~~ E^{0,k}=E^{k,0}=0 ~~\text{for}~k>1.
\end{align}

\subsubsection{The compatibility with the differential}
\newcommand{\gau}{\overleftarrow}
 \newcommand{\droi}{\overrightarrow}
We investigate the equation $(d_{\Omega}\ot 1 + 1\ot d_{\Omega})\nabla_E= \nabla_E d_{\Omega}$.
\\
\\
On the component  $\ov{C}^{\ot k}\ot \ov{C}^{\ot l}\subset \Omega C \ot \Omega C$, we obtain:
\begin{equation}\label{eq: Leibniz}
 \begin{split}
  d_{\ov{C}^{\ot k}\ot \ov{C}^{\ot l}}E^{k,l}- E^{k,l}d_{\ov{C}}=
 &-\sum_{i+2+j=k}((1^{\ot i}\ot \nabla_{\ov{C}}\ot 1^{\ot j})\ot 1^{\ot l})E^{k-1,l}\\
 &- \sum_{i+2+j=l}(1^{\ot k}\ot(1^{\ot i}\ot \nabla_{\ov{C}}\ot 1^{\ot j}))E^{k,l-1} \\
&+ \sum_{\substack{k_1+k_2=k \\ l_1+l_2=l}} \mu_{\Omega \ot \Omega }(E^{k_1,l_1}\ot E^{k_2,l_2})\nabla_{\ov{C}},
 \end{split}
\end{equation}
where $k_1+l_1>0$ and $k_2+l_2>0$.
\\

The terms $\mu_{\Omega \ot \Omega }(E^{k_1,l_1}\ot E^{k_2,l_2})\nabla_{\ov{C}}$ read as follows.
We set
\[
E^{k,l}(a)= a^{|\gau{k}}\ot a^{|\droi{l}} ~~~\text{and}~~~ \nabla_{\ov{C}}(a)=a^1\ot a^2.
\]
Then
\[
 \mu_{\Omega \ot \Omega }(E^{k_1,l_1}\ot E^{k_2,l_2})\nabla_{\ov{C}}=\pm (a^{1|\gau{k_1}}\ot a^{2|\gau{k_2}}) \ot ( a^{1|\droi{l_1}}\ot a^{2|\droi{l_2}}).
\]

On the component $\ov{C}\ot \ov{C}\subset \Omega C\ot \Omega C $ the equation \eqref{eq: Leibniz} gives:
\begin{equation}\label{Leibniz2}
 \begin{split}
  d_{\ov{C}\ot \ov{C}}E^{1,1}- E^{1,1}d_{\ov{C}}= \nabla_{\ov{C}} + \tau\nabla_{\ov{C}};
 \end{split}
\end{equation}
that is, on $C^+\ot C^+$:
\begin{equation}\label{Leibniz1}
 \begin{split}
  d_{C^+\ot C^+}E_+^{1,1} +  E_+^{1,1}d_{C^+}= \nabla_{C^+} - \tau\nabla_{C^+},
 \end{split}
\end{equation}
where $E^{1,1}=(\ds\ot \ds)E_+^{1,1}\s $.\\
Thus $E_+^{1,1}$ is a chain homotopy for the  cocommutativity of the coproduct $\nabla_{C^+}$.
We distinguish it from the  co-operations $E^{i,j}$ and we denote it by $\nabla_1$ as a dual analogue to the  $\cup_1$-product.

\subsubsection{The coassociativity condition}
On the  component $\ov{C}^{\ot i}\ot \ov{C}^{\ot j} \ot \ov{C}^{\ot k}$ we obtain:
\begin{equation}\label{eq: coass Hirsch general}
\begin{split}
 \sum_{n=0}^{i+j} \sum_{\substack{i_1+...+i_n=i\\ j_1+...+j_n =j}}(\mu^{(n)}_{\Omega\ot \Omega}( E^{i_1,j_1}\ot...\ot E^{i_n,j_n})\ot 1^{\ot k})E^{n,k}\\
=\sum_{n=0}^{j+k} \sum_{\substack{j_1+...+j_n =j \\ k_1+...+k_n=k}}( 1^{\ot i}\ot\mu^{(n)}_{\Omega\ot \Omega}( E^{j_1,k_1}\ot...\ot E^{j_n,k_n}))E^{i,n}.
\end{split}
\end{equation}
In particular, for $i=j=k=1$, we have:
\begin{equation*}
 (E^{1,1}\ot 1)E^{1,1} +(1\ot 1)E^{2,1} +(\tau\ot 1)E^{2,1} =(1\ot E^{1,1})E^{1,1}+(1\ot 1)E^{1,2}+(1\ot \tau)E^{1,2}.
\end{equation*}
That is,
\begin{equation}\label{eq: coass Hirsch}
 (\nabla_1\ot 1)\nabla_1 -(1\ot \nabla_1)\nabla_1 =   (1\ot (1+\tau))E^{1,2}-((1+\tau)\ot 1)E^{2,1}.
\end{equation}
The lack of coassociativity of $\nabla_1$ is controlled by the co-operations $E^{1,2}$ and $E^{2,1}$.

\begin{de}
 A homotopy G-coalgebra $(C,d,\nabla_C,E^{i,1})$ is a Hirsch coalgebra such that $E^{i,j}=0$ for $i\geq 2$.
\end{de}
\begin{prop} 
Let $(C,d,\nabla_C,E^{i,j})$ be a Hirsch coalgebra. Then $E^{i,j}=0$ for $i\geq 2$ is equivalent to the following left-sided condition:
 \begin{equation}\label{eq: left-side condition}
  \forall r \in \mathbb{N}, ~J_r:=\oplus_{n\leq r} \ov{C}^{\ot n} ~~\text{is a left co-ideal for the  coproduct} ~\nabla_{E}: \Omega C\to \Omega C \ot \Omega C.
 \end{equation}
\end{prop}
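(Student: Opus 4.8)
The plan is to recast both conditions as a single bookkeeping statement about the ``left length'' of the terms of $\nabla_E$, using that $\nabla_E$ is an algebra morphism out of the free algebra $\Omega C=T(\ov{C})$ and that its target $\Omega C\ot\Omega C$ multiplies by concatenation in each of the two tensor slots. Here a left co-ideal for $\nabla_E$ means a subspace $J$ with $\nabla_E(J)\subseteq J\ot\Omega C$. Grade $\Omega C=\bigoplus_{n\geq 0}\ov{C}^{\ot n}$ by \emph{length}, so that $J_r$ consists exactly of the words of length at most $r$. For a word $w=a_1|\cdots|a_m\in\ov{C}^{\ot m}$ we have $\nabla_E(w)=E(a_1)\cdots E(a_m)$, the product being taken in $\Omega C\ot\Omega C$; since $E(a)=\sum_{i,j}E^{i,j}(a)$ with $E^{i,j}(a)\in\ov{C}^{\ot i}\ot\ov{C}^{\ot j}$, and since $\Omega C\ot\Omega C=\bigoplus_{i,j}\ov{C}^{\ot i}\ot\ov{C}^{\ot j}$ is a direct sum (so no cancellation mixes bidegrees), every term of $\nabla_E(w)$ lying in $\ov{C}^{\ot p}\ot\ov{C}^{\ot q}$ has $p=\sum_k i_k$ for some choice of nonzero components $E^{i_k,j_k}(a_k)$.

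For the forward implication, assume $E^{i,j}=0$ for $i\geq 2$. By the co-unit relations $E^{0,1}=E^{1,0}=\mathrm{Id}$ and $E^{0,k}=E^{k,0}=0$ for $k>1$, each $E(a_k)$ is then a sum of terms of left length $0$ (the term $1\ot a_k$) or $1$ (the term $a_k\ot 1$ and the $E^{1,j}(a_k)$, $j\geq 1$). Hence every term of $\nabla_E(w)$ for $w$ of length $m\leq r$ has left length $\sum_k i_k\leq m\leq r$, i.e.\ $\nabla_E(J_r)\subseteq J_r\ot\Omega C$; this is exactly \eqref{eq: left-side condition}.

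For the converse it is enough to use $r=1$. Restricting $\nabla_E$ to length-one words identifies $\nabla_E|_{\ov{C}}$ with $E$, so $\nabla_E(a)=\sum_{i,j}E^{i,j}(a)$ for $a\in\ov{C}$; if $J_1$ is a left co-ideal then $\nabla_E(a)\in J_1\ot\Omega C=\bigoplus_{i\leq 1,\,j}\ov{C}^{\ot i}\ot\ov{C}^{\ot j}$, and comparing bidegrees forces $E^{i,j}(a)=0$ for all $i\geq 2$. Since $a\in\ov{C}$ is arbitrary, $E^{i,j}=0$ for $i\geq 2$.

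I do not anticipate a genuine obstacle; the only things to handle with care are the Koszul signs in the product on $\Omega C\ot\Omega C$ (irrelevant to the length count) and the remark that membership in $J_r\ot\Omega C$ may be tested bidegree by bidegree, which is precisely the direct sum decomposition of $\Omega C\ot\Omega C$ into the pieces $\ov{C}^{\ot i}\ot\ov{C}^{\ot j}$.
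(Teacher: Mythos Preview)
Your proof is correct and follows essentially the same route as the paper: multiplicativity of $\nabla_E$ reduces the question to counting left lengths of the factors $E(a_k)$, giving $\sum_k i_k\le m\le r$ when all $i_k\le 1$, and the converse is handled by restricting to $J_1$. The paper's argument is the same, only more tersely stated.
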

\begin{proof}
 If $J_r$ is a left co-ideal then $\nabla_E(J_r)\subset J_r\ot \Omega C$. Then for $r=1$, we have $\nabla_E(\ov{C})= \sum_{i,j} E^{i,j}(\ov{C})\subset \ov{C}\ot \Omega C$. This implies that $E^{i,j}=0$ for $i\geq 2$. 
Conversely, if $E^{i,j}=0$ for $i\geq 1$, then 
\begin{multline*}
\nabla_E(c_1\ot \cdots \ot c_n)= \sum_{i_1,\cdots ,i_n,j_1,\cdots , j_n} \mu_{\Omega}^{(n)}(E^{i_1,j_1}\ot \cdots \ot  E^{i_n,j_n})(c_1\ot \cdots c_n) 
\end{multline*}
belongs to $\oplus \ov{C}^{i_1+\cdots + i_n}\ot \Omega C$. Since $i_s\leq 1$ for $1\leq s\leq n$, then $i_1+\cdots + i_n \leq n$.
\end{proof}

 \bibliographystyle{alpha}
\bibliography{biblio-utf8}

\begin{thebibliography}{{Que}13}

\bibitem[AH56]{AdamsHilton}
J.~F. Adams and P.~J. Hilton.
\newblock On the chain algebra of a loop space.
\newblock {\em Comment. Math. Helv.}, 30:305--330, 1956.

\bibitem[Bau81]{Bauesdouble}
H.~J. Baues.
\newblock The double bar and cobar constructions.
\newblock {\em Compositio Math.}, 43(3):331--341, 1981.

\bibitem[Bau98]{Bauesgeom}
Hans-Joachim Baues.
\newblock The cobar construction as a {H}opf algebra.
\newblock {\em Invent. Math.}, 132(3):467--489, 1998.

\bibitem[CM00]{ConnesMosc}
Alain Connes and Henri Moscovici.
\newblock Cyclic cohomology and {H}opf algebra symmetry.
\newblock {\em Lett. Math. Phys.}, 52(1):1--28, 2000.
\newblock Conference Mosh{\'e} Flato 1999 (Dijon).

\bibitem[Coh76]{FredCohen}
Frederick Cohen.
\newblock The homology of ${C}_{n+1}$-spaces, $n\geq 0$, in the homology of
  iterated loop spaces.
\newblock Lecture Notes in Math.(Vol.533):207--351, 1976.

\bibitem[Get94]{Getzler}
E.~Getzler.
\newblock Batalin-{V}ilkovisky algebras and two-dimensional topological field
  theories.
\newblock {\em Comm. Math. Phys.}, 159(2):265--285, 1994.

\bibitem[GM08]{MenichiGaudens}
Gerald Gaudens and Luc Menichi.
\newblock Batalin-{V}ilkovisky algebras and the {$J$}-homomorphism.
\newblock {\em Topology Appl.}, 156(2):365--374, 2008.

\bibitem[Gug82]{Gugenheim}
V.~K. A.~M. Gugenheim.
\newblock On a perturbation theory for the homology of the loop-space.
\newblock {\em J. Pure Appl. Algebra}, 25(2):197--205, 1982.

\bibitem[Hil55]{Hilton}
P.~J. Hilton.
\newblock On the homotopy groups of the union of spheres.
\newblock {\em J. London Math. Soc.}, 30:154--172, 1955.

\bibitem[Kad04]{Kadeishvili-Mesuring}
T.~Kadeishvili.
\newblock Measuring the noncommutativity of {DG}-algebras.
\newblock {\em J. Math. Sci. (N. Y.)}, 119(4):494--512, 2004.
\newblock Topology and noncommutative geometry.

\bibitem[Kad05]{Kadeishvili}
T.~Kadeishvili.
\newblock On the cobar construction of a bialgebra.
\newblock {\em Homology Homotopy Appl.}, 7(2):109--122, 2005.

\bibitem[LR10]{LodayRonco}
Jean-Louis Loday and Mar{\'{\i}}a Ronco.
\newblock Combinatorial {H}opf algebras.
\newblock In {\em Quanta of maths}, volume~11 of {\em Clay Math. Proc.}, pages
  347--383. Amer. Math. Soc., Providence, RI, 2010.

\bibitem[Men04]{MenichiBV}
Luc Menichi.
\newblock Batalin-{V}ilkovisky algebras and cyclic cohomology of {H}opf
  algebras.
\newblock {\em $K$-Theory}, 32(3):231--251, 2004.

\bibitem[ML95]{MacLaneBookHomology}
Saunders Mac~Lane.
\newblock {\em Homology}.
\newblock Classics in Mathematics. Springer-Verlag, Berlin, 1995.
\newblock Reprint of the 1975 edition.

\bibitem[{Que}13]{Quesney}
A.~{Quesney}.
\newblock Homotopy {BV}-algebra structure on the double cobar construction.
\newblock {\em ArXiv e-prints (submitted)}, May 2013.

\bibitem[Tou06]{Tourtchine}
Victor Tourtchine.
\newblock Dyer-{L}ashof-{C}ohen operations in {H}ochschild cohomology.
\newblock {\em Algebr. Geom. Topol.}, 6:875--894 (electronic), 2006.

\bibitem[TT00]{TamarkinTsygan}
D.~Tamarkin and B.~Tsygan.
\newblock Noncommutative differential calculus, homotopy {BV} algebras and
  formality conjectures.
\newblock {\em Methods Funct. Anal. Topology}, 6(2):85--100, 2000.

\end{thebibliography}
\end{document}